\numberwithin{algorithm}{section}
\newtheorem{theorem}{Theorem}[section]
\newtheorem{corollary}[theorem]{Corollary}
\newtheorem{remark}[theorem]{Remark}
\crefname{hypothesis}{Hypothesis}{Hypotheses}
\numberwithin{equation}{section}
\numberwithin{figure}{section}
\numberwithin{table}{section}
\journal{Computer Methods in Applied Mechanics and Engineering}
\begin{document}

\begin{frontmatter}

\title{Smoother-type a posteriori error estimates for finite element methods}

\author[1]{Yuwen Li\corref{corresponding}}
\ead{liyuwen@zju.edu.cn}

\author[1]{Han Shui}
\ead{shuihan@zju.edu.cn}
\cortext[corresponding]{Corresponding author.}

\affiliation[1]{organization={School of Mathematical Sciences, Zhejiang University},
            addressline={866 Yuhangtang Road},
            city={Hangzhou},
            postcode={310058},
            state={Zhejiang},
            country={People's Republic of China}}
            
\tnotetext[2]{\textbf{Funding:}~ This work was supported by the National Key R\&D Program of China under grant 2024YFA1012600 and the National Natural Science Foundation of China under grant 12471346.}
            
\begin{abstract}
This work develops user-friendly a posteriori error estimates of finite element methods, based on smoothers of linear iterative solvers. The proposed method employs simple smoothers, such as Jacobi or Gauss--Seidel iteration, on an auxiliary finer mesh to process the finite element residual for a posteriori error control. The implementation has linear complexity and requires only a coarse-to-fine prolongation operator. For symmetric problems, we prove the reliability and efficiency of smoother-type error estimators under a saturation assumption. Numerical experiments for various PDEs demonstrate that the proposed smoother-type error estimators outperform residual-type estimators in accuracy and exhibit robustness with respect to physical parameters and polynomial degrees.
\end{abstract}

\begin{keyword}
a posteriori error estimate \sep adaptive finite element method  \sep iterative solver \sep preconditioning \sep smoother \sep parameter- and  $p$-robust
\end{keyword}

\end{frontmatter}

\section{Introduction}
Adaptive mesh refinement is a common routine in competitive numerical methods for resolving singularities and sharp gradients as well as saving computational cost in large-scale numerical simulations. In Adaptive Finite Element Methods (AFEMs), a posteriori error estimation is a key tool for equi-distributing errors and constructing nearly optimal locally refined meshes, see \cite{Schwab1998,BabuskaStrouboulis2001,Verfurth2013,BonitoCanutoNochettoVeeser2024} for
basic ingredients of AFEMs and references therein. Classical a posteriori error estimates of AFEMs include residual, superconvergent recovery, dual weighted residual, equilibrated, hierarchical basis, functional a posteriori error estimators, see, e.g., \cite{ZienkiewiczZhu1992b,Bank1996,AinsworthOden2000,BeckerRannacher2001,BankXu2003a,CarstensenBartels2002,ZhangNaga2005,Ainsworth2007,Repin2008,ErnVohralik2015,ErnVohralik2020,Li2025arxiv,Li2018SINUM,BankLi2019,Li2021JSCb} for an incomplete list of references in this field.

A posteriori error estimation of Finite Element Methods (FEMs) is also closely related to iterative linear solvers because both methods aim at developing computable approximations to the action of solution operators on some residuals. For instance, the idea of hierarchical basis solvers motivated the development of hierarchical basis error estimators in \cite{BankSmith1993,Bank1996}. Multigrid solvers combined with superconvergence are used for constructing a posteriori error estimates in \cite{BankXu2003b,BankXuZheng2007}. In \cite{MulitaGianiHeltai2021}, residual error estimators are applied to inaccurate intermediate solutions produced by several smoothing passes to save computational cost in AFEMs. It has been shown in 
\cite{LiZikatanov2021CAMWA,LiZikatanov2025mcom} that additive operator preconditioning on infinite-dimensional spaces yields implicit and parameter-robust a posteriori error estimates for $H({\rm grad})$,
$H({\rm curl})$ and $H({\rm div})$ problems. Conversely, a posteriori error estimation has been applied to design coarse spaces in multigrid methods and to control iterative algebraic errors in domain decomposition methods, see \cite{XuZikatanov2018,HuWuZikatanov2021,BastidasVohralik2025}.

We shall follow the research line in \cite{LiZikatanov2021CAMWA,LiZikatanov2025mcom} to derive a posteriori error estimates based on smoothers of iterative linear solvers. Smoothers are simple basic iterations such as Jacobi or Gauss--Seidel method on a fine mesh and are efficient for damping the high frequency component of the FEM error. We shall show that appropriate smoothers directly yield high-quality a posteriori error estimates of FEMs for various PDEs such as the Poisson, curl-curl, Biot, convection-diffusion and Helmholtz equations. In numerical experiments, smoother-based error estimators exhibit sharper effectivity ratios than residual-type a posteriori error estimates. In contrast to the additive preconditioning-type implicit error estimators derived in \cite{LiZikatanov2021CAMWA,LiZikatanov2025mcom}, which involve incomputable local problems, the proposed smoother-type a posteriori error estimators are fully computable. Moreover, this work introduces a new family of error estimators inspired by contractive iteration methods, which are particularly well-suited for handling non-symmetric problems. {For Lagrange finite element discretizations, the smoother-type error estimators are explicit, as they avoid  solving patchwise local problems.}

The implementation of the smoother-type a posteriori error estimates on a mesh $\mathcal{T}_h$ is similar to a two-grid solver. In particular, we shall make use of a simple smoother $S$, e.g., a Jacobi, block Jacobi or Gauss--Seidel iteration on an auxiliary finer mesh $\mathcal{T}_{h/2}$. Then a smoother-type a posteriori error estimate is of the form $\|Sr\|$ or $\sqrt{\langle r,Sr\rangle}$, where $r$ is the finite element residual transferred to $\mathcal{T}_{h/2}$, $\|\bullet\|$ is the norm of interest and $\langle\bullet,\bullet\rangle$ is the pairing between the primal and dual spaces. The reliability and efficiency of our a posteriori error estimates are confirmed under a saturation assumption.  Using the same assumption, adaptive finite and boundary element methods driven by the difference $\|u_h-u_{h/2}\|$ of finite element solution on nested meshes have been analyzed in \cite{FerrazPraetorius2008,FerrazOrtnerPraetorious2010}, while computing the finer mesh solution $u_{h/2}$ is much more expensive than the smoother action $Sr$.

Smoother-type a posteriori error analysis in this paper offers a possibility that directly translates available iterative solvers of complex PDE systems into a posteriori error estimates inheriting many desirable properties of established solvers. For instance, complex multi-physics problems are modeled by saddle-point systems depending on a set of physical parameters (e.g., Lamé constants and permeability). In the numerical study, a common first step is a discrete inf-sup stability analysis and a uniformly efficient block diagonal preconditioner. The proposed framework avoids redoing a posteriori error analysis for multi-physics systems and is able to rewrite their solvers as parameter-robust smoother-type a posteriori error estimates, see Section \ref{subsect:Biot} for an application in Biot's consolidation model.

In addition, numerical experiments demonstrate that the effectiveness of smoother-type a posteriori error estimates is robust with respect to the polynomial degree (referred to as $p$-robust) of finite elements. The $p$-robustness is an important property in $hp$-AFEMs and real-world simulations by very high order FEMs (cf.~\cite{Schwab1998,Demkowicz2007,Demkowicz2008}). In the literature, well-known $p$-robust a posteriori error estimates include equilibrated and superconvergent error estimators based on solving local problems on each vertex patch, see \cite{BraessPillweinSchoberl2009,BartelsCarstensen2002}. Compared to classical $p$-robust estimators, pointwise smoother-type error estimators are easier to implement and do not necessitate solving auxiliary problems on local patches.

The rest of the paper is organized as follows. In
Section~\ref{sect:framework} we provide preliminaries
and set up the abstract framework. In Section~\ref{sect:discreteV} we employ smoothers of  iterative solvers to derive a posteriori error estimates for positive-definite problems.  Section~\ref{sect:saddle}
is devoted to the smoother-type a posteriori error estimates of symmetric saddle-point systems. Section~\ref{sect:nonsymmetric} illustrates the applications of smoother-type estimators to convection-diffusion and Helmholtz equations.  Section \ref{sect:conclusion} is the concluding remarks.

\section{Abstract Framework}\label{sect:framework}
In this section, we present two smoother-type a posteriori error estimates for an abstract positive-definite model posed in a Hilbert space that admits a stable subspace decomposition.

\subsection{Abstract Error Estimators}
Let $V_h\subset V$ be a pair of nested Hilbert spaces and $a: V\times V\rightarrow \mathbb{R}$ be a bounded and inf-sup stable bilinear form. Given $f\in V^*$ in the dual space $V^*$, we consider the following model problem and its Galerkin discretization: Find $u\in V$ and $u_h\in V_h$ such that
\begin{subequations}\label{eq:model}
    \begin{align}
    a(u,v)&=f(v),\quad v\in V,\label{subeq:model}\\
    a(u_h,v_h)&=f(v_h),\quad v_h\in V_h.\label{subeq:FEM}
\end{align}
\end{subequations}
Throughout this paper, $V_h$ is a proper finite element space based on a partition $\mathcal{T}_h$ of the underlying domain $\Omega$ for \eqref{subeq:model}.
The form $a$ induces a bounded linear operator $A: V\rightarrow V^*$ defined as follows:
\begin{equation*}
    (Av)(w)=\langle Av,w\rangle:=a(v,w),\quad v, w\in V.
\end{equation*}
Here $\langle\bullet,\bullet\rangle$ is the duality pairing between the primal and dual spaces.  We say $A: V\rightarrow V^*$ is Symmetric and Positive-Definite (SPD) if $\langle Av,v\rangle\geq0$ and $\langle Av,v\rangle=0\Longleftrightarrow v=0$ for any $v\in V$. Clearly $A$ induces an inner product and a norm on $V$:
\[
(v,w)_A=\langle Av,w\rangle,\quad \|v\|_A=\sqrt{(v,v)_A}.
\]
By $T^*: W_2^*\rightarrow W_1^*$ we denote the adjoint operator of $T: W_1\rightarrow W_2$.
In our analysis, the ambient space $V$ is either: 
\begin{itemize}
    \item a Sobolev space such as $H^1(\Omega)$, $H({\rm curl},\Omega)$, $H({\rm div},\Omega)$ on the continuous level;
    \item  a finite element space based on a finer grid $\mathcal{T}_{h/2}$ or higher order polynomials.
\end{itemize} 

Let $V_1, \ldots, V_N\subset V$ be $N$ subspaces of $V$. Let $I_h: V_h\hookrightarrow V$ and $I_k: V_k\hookrightarrow V$ be the natural inclusions. In practice, $V_k$ is a Sobolev or finite element space on a local sub-domain $\Omega_k$. The operator $A_k: V_k\rightarrow V_k^*$ is the restriction $A|_{V_k}$, i.e., 
\[
(A_kv_k)(w_k)=\langle A_kv_k,w_k\rangle=a(v_k,w_k),\quad v_k, w_k\in V_k.
\]

We adopt the notation $C_1\lesssim C_2$ provided $C_1\leq C_3C_2$ with $C_3$ being a generic constant independent of quantities of interest, e.g., the mesh size parameter $h$ and target functions. We say $C_1\eqsim C_2$ provided $C_1\lesssim C_2$, $C_2\lesssim C_1$. This notation generalizes to comparison between SPD operators in an obvious way. It has been observed in \cite{LiZikatanov2021CAMWA} that additive two-level preconditioning yields a reliable and efficient error estimator for SPD problems. For completeness, we include a brief proof. 
\begin{theorem}\label{thm:rSr}
Let $B_a=I_hA_h^{-1}I_h^*+S_a: V^*\rightarrow V$ with $S_a:=\sum_{k=1}^NI_kA_k^{-1}I_k^*$.
Assume that $B_a$ is a preconditioner for the SPD operator $A$, i.e.,
\begin{equation}\label{eq:Ba_bound}
c_0\langle R,B_aR\rangle\leq\langle R,A^{-1}R\rangle\leq c_1\langle R,B_aR\rangle,\quad\forall R\in V^*,
\end{equation}
where $c_0, c_1>0$ are uniform constants. Then with $r=f-Au_h$, it holds that  \begin{equation}\label{eq:errorBa}
\sqrt{c_0}\langle r,S_ar\rangle^\frac{1}{2}\leq\|u-u_h\|_A\leq\sqrt{c_1}\langle r,S_ar\rangle^\frac{1}{2}.
\end{equation}   
\end{theorem}
\begin{proof}
Using the relation $A(u-u_h)=r$, we have
\begin{equation*}
\|u-u_h\|_A=\langle A(u-u_h),u-u_h\rangle^\frac{1}{2}=\langle r,A^{-1}r\rangle^\frac{1}{2}.
\end{equation*}  
Noticing $\langle I_h^*r,v_h\rangle=\langle r,v_h\rangle=0~\forall v_h\in V_h$, we also obtain $B_ar=S_ar$.
Then \eqref{eq:errorBa} follows from the spectral equivalence assumption $A^{-1}\eqsim B_a$.
\end{proof}

If $V=V_h+\sum_{k=1}^NV_k$ is a stable subspace decomposition, i.e., for any $v \in V$, there exist $v_h\in V_h$ and $v_k\in V_k$ with $k=1, \ldots, N$ such that 
\begin{align*}
&v = v_h + \sum_{k=1}^N v_k,\\  &\|v_h\|_A^2 + \sum_{k=1}^N \|v_k\|_A^2 \lesssim \|v\|_A^2,    
\end{align*} 
then the spectral equivalence \eqref{eq:Ba_bound} holds and $B_a$ is a classical additive Schwarz preconditioner for $A$. The spaces $H^1(\Omega)$, $H({\rm curl},\Omega)$, $H({\rm div},\Omega)$ and their finite element discrete variants under consideration all admit an appropriate stable decomposition for the analysis. In the literature, $S_a$ is referred to as an additive smoother for damping the high frequency component of the iterative error. 
Theorem \ref{thm:rSr} implies that the smoother $S_a$ yields a reliable and efficient a posteriori error estimate of finite element method. The inverse $A_k^{-1}$ in $S_a$ often amounts to solving a local problem on $\Omega_k$. 

We remark that $\langle r,S_ar\rangle^\frac{1}{2}$ is applicable to SPD problems as well as symmetric but indefinite systems posed in a Cartesian product space $V_1\times V_2\times\cdots$, where $S_a={\rm diag}(S_{V_1},S_{V_2},\ldots)$ is block diagonal with each $S_{V_i}$ being an SPD smoother on $V_i$, see Section \ref{sect:saddle}. However, $\langle r,S_ar\rangle^\frac{1}{2}$ could not be applied to strongly non-symmetric  problems because $\langle r,S_ar\rangle$ is not necessarily nonnegative in such cases.
Our new observation is that a contractive iterative solver for $Au=f$
\[
u_{n+1}=u_n+B(f-Au_n),\quad n=0,1,2,\ldots
\]
yields an alternative a posteriori error estimate that may be applied to non-symmetric problems such as the Helmholtz and convection-diffusion equations. 
\begin{theorem}\label{thm:Brnorm}
Let $\|\bullet\|_M$ be a norm on $V$. Let $B: V^*\rightarrow V$ be a contractive iterator with respect to the $M$-norm, i.e.,
\[
\|I-BA\|_M<1.
\]
Then with $r=f-Au_h\in V^*$, we have
\begin{equation*}
    \frac{1}{2}\|Br\|_M<\|u-u_h\|_M\leq\frac{\|Br\|_M}{1-\|I-BA\|_M}.
\end{equation*}
\end{theorem}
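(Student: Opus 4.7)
The plan is to reduce everything to one algebraic identity. Setting $e := u-u_h$, the Galerkin residual satisfies $r = f - Au_h = Au - Au_h = Ae$, so applying $B$ yields $Br = BAe = e - (I-BA)e$. Consequently the error and the computable quantity $Br$ differ by exactly $(I-BA)e$, and the contractivity hypothesis $\|I-BA\|_M =: \rho < 1$ controls this difference in the $\|\cdot\|_M$ norm as
\begin{equation*}
\|e - Br\|_M = \|(I-BA)e\|_M \leq \rho\,\|e\|_M.
\end{equation*}

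The upper bound follows from the triangle inequality applied to $e = (e-Br) + Br$:
\begin{equation*}
\|e\|_M \leq \|e-Br\|_M + \|Br\|_M \leq \rho\,\|e\|_M + \|Br\|_M,
\end{equation*}
and rearranging gives $\|u-u_h\|_M = \|e\|_M \leq \|Br\|_M/(1-\rho)$.

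For the lower bound I would apply the triangle inequality in the reverse direction to $Br = e - (I-BA)e$:
\begin{equation*}
\|Br\|_M \leq \|e\|_M + \|(I-BA)e\|_M \leq (1+\rho)\|e\|_M,
\end{equation*}
so $\|e\|_M \geq \|Br\|_M/(1+\rho)$. Because $\rho < 1$ we have $1+\rho < 2$, which yields the strict inequality $\|u-u_h\|_M > \tfrac{1}{2}\|Br\|_M$.

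There is no genuine obstacle here: the proof is two applications of the triangle inequality bookending the identity $Br = e - (I-BA)e$. The only subtlety worth stating explicitly is that $\|I-BA\|_M$ is interpreted as the operator norm of $I-BA: (V,\|\cdot\|_M)\to (V,\|\cdot\|_M)$, and the estimate is applied to $e \in V$, which is legitimate because $u,u_h\in V$. I would present the argument in the order above so that the sharper constant $1/(1+\rho)$ in the lower bound is visible before it is weakened to $1/2$ for the final statement.
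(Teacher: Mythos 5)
Your proof is correct and delivers exactly the constants in the statement, but it takes a somewhat different route from the paper's. The paper argues at the operator level: it writes $\|u-u_h\|_M=\|A^{-1}B^{-1}Br\|_M\le\|(BA)^{-1}\|_M\|Br\|_M$, invokes the Neumann-series bound $\|(BA)^{-1}\|_M\le 1/(1-\|I-BA\|_M)$ for the upper bound, and uses $\|BA\|_M\le 1+\|I-BA\|_M<2$ for the lower bound. You instead apply the contraction directly to the error vector through the identity $Br=e-(I-BA)e$ with $e=u-u_h$, bookended by two triangle inequalities. These are the same estimates in disguise --- your rearrangement $(1-\rho)\|e\|_M\le\|Br\|_M$ is precisely the proof of the Neumann bound specialized to the single vector $e$, and your bound $\|Br\|_M\le(1+\rho)\|e\|_M$ is the paper's $\|BA\|_M\le 1+\rho$ applied to $e$ --- but your packaging buys a genuine simplification: you never need to assert invertibility of $BA$, let alone of $B$ and $A$ separately, which the paper's intermediate expression $\|A^{-1}B^{-1}Br\|_M$ implicitly presupposes. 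One caveat, shared by both arguments and inherent to the statement itself: the strict inequality $\tfrac{1}{2}\|Br\|_M<\|u-u_h\|_M$ degenerates when $u=u_h$, since both sides then vanish; your step $(1+\rho)\|e\|_M<2\|e\|_M$ requires $\|e\|_M>0$, exactly as the paper's strict bound $\|BA\|_M<2$ does, so you might note that trivial case explicitly.
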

\begin{proof}
We start with
\begin{equation}\label{error_Br}
\begin{aligned}
&\|u-u_h\|_M=\|A^{-1}B^{-1}Br\|_M\leq\|(BA)^{-1}\|_M\|Br\|_M,\\
&\|Br\|_M=\|BA(u-u_h)\|_M\leq\|BA\|_M\|u-u_h\|_M.
\end{aligned}
\end{equation}
The fact $\|I-BA\|_M<1$ implies that $BA$ is invertible and  
\begin{equation}\label{BAinvnorm}
 \|(BA)^{-1}\|_M\leq\frac{1}{1-\|I-BA\|_M}.   
\end{equation}
It also follows from $\|I-BA\|_M<1$ that 
\begin{equation}\label{BAnorm}
\|BA\|_M\leq1+\|I-BA\|_M<2.
\end{equation}
Combining \eqref{error_Br} with \eqref{BAinvnorm} and \eqref{BAnorm} completes the proof.
\end{proof}

\begin{remark}
In most cases, $\|\bullet\|_M$ is taken to be the underlying norm $\|\bullet\|_A$ induced by the SPD operator $A$. We note, however, that $\|\bullet\|_M$ practically need not coincide with the natural norm $\|\bullet\|_A$. For instance, choosing $\|\bullet\|_M = \|\bullet\|_{L^2(\Omega)}$ yields a posteriori error estimates in the $L^2$-norm for problems posed in $V = H^1(\Omega)$ or $V = H({\rm curl},\Omega)$; see Sections \ref{subsect:NE_Hgrad} and \ref{subsect:Hcurl}.
\end{remark}

Theorem \ref{thm:Brnorm} states that $\|Br\|_M$ is both an upper and lower bound of the FEM discretization error up to uniform multiplicative constants. In practice, $B$ often makes use of two-level or multi-level multiplicative structures to achieve uniform contraction. On the finer level, the multiplicative smoother $S_m$ given in Algorithm \ref{alg:SSC} is  widely used.  

\begin{algorithm}[thp]
\caption{Multiplicative smoother $S_m$}\label{alg:SSC}
\begin{algorithmic}
\STATE Let $u_0=0$ and $R\in V^*$;

\FOR{$k=1:N$}
    \STATE find $\eta_k\in V_k$ s.t. $a(\eta_k,v)=R(v)-a(u_{k-1},v),\quad v\in V_k$;
\STATE set $u_k=u_{k-1}+\eta_k$;
\ENDFOR

\STATE set $S_mR=u_N$.

\end{algorithmic}
\end{algorithm}
Here $S_m$ is formulated as a method of subspace correction (cf.~\cite{Xu1992,XuZikatanov2002}). When ${\rm dim}(V_k)=1$ for each $V_k$, Algorithm \ref{alg:SSC} reduces to a Gauss--Seidel iteration. It has been proved in \cite{XuZikatanov2002} that $\|I-S_mA\|_A<1$ provided $A$ is SPD and $V=\sum_{k=1}^NV_k$. In general, $\|I-S_mA\|_A$ is not uniformly bounded away from 1 and a coarse level correction in $V_h$ is needed for uniform convergence.
Similarly to Theorem \ref{thm:rSr}, a key observation is that the \emph{smoother} in a two-level iterator $B$ (rather than $B$ itself) suffices to provide reliable and efficient a posteriori error estimates. 
\begin{corollary}\label{cor:Srnorm}
Let $\|\bullet\|_M$ be a norm on $V$ and $S_m: V^*\rightarrow V$ be given in Algorithm \ref{alg:SSC}. Assume that $V=V_h+\sum_{k=1}^NV_k$. Then the two-level iteration
\begin{subequations}\label{alg:twol_successive}
\begin{align}
\eta&=u_n+A_h^{-1}I_h^*(f-Au_n),\\
u_{n+1}&= \eta+S_m(f-A\eta),\quad n=0, 1, 2, \ldots
\end{align}    
\end{subequations}
for solving $Au=f$
is contractive, i.e., $\|I-B_mA\|_M<1$ with  $B_m:=S_m+A_h^{-1}I_h^*-S_mAA_h^{-1}I_h^*$. In addition, we have 
\begin{equation*}
    \frac{1}{2}\|S_mr\|_M<\|u-u_h\|_M\leq\frac{\|S_mr\|_M}{1-\|I-B_mA\|_M}.
\end{equation*}
\end{corollary}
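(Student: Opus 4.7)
The plan is to first identify $B_m$ as the error propagation operator of the two-level iteration \eqref{alg:twol_successive}, then establish its contractivity, and finally reduce $B_m r$ to $S_m r$ using Galerkin orthogonality so that Theorem \ref{thm:Brnorm} applies directly.

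First I would subtract $u$ from both update formulas in \eqref{alg:twol_successive}. Using $f = Au$, the coarse step gives $\eta - u = (I - A_h^{-1}I_h^* A)(u_n - u)$, and the smoother step gives $u_{n+1} - u = (I - S_m A)(\eta - u)$. Composing these yields
\begin{equation*}
u_{n+1} - u = (I - S_m A)(I - A_h^{-1}I_h^* A)(u_n - u),
\end{equation*}
and expanding the product confirms that the error propagation operator is exactly $I - B_m A$ for the $B_m$ stated in the corollary. This is a purely algebraic verification.

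Next I would establish the contraction estimate $\|I - B_m A\|_M < 1$. Since the two-level scheme is a method of successive subspace correction associated with the decomposition $V = V_h + \sum_{k=1}^N V_k$, which is stable by assumption, the Xu--Zikatanov identity from \cite{XuZikatanov2002} yields a strict contraction in the $A$-norm (so one takes $M = A$ here, consistent with the SPD setting of Algorithm \ref{alg:SSC}). Invoking that identity is the one nontrivial input, and I view this as the main obstacle in the sense that it relies on a nonelementary external result; once quoted, the bound $\|I - B_m A\|_A < 1$ follows with no additional work.

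Finally, I would exploit the Galerkin orthogonality $I_h^* r = 0$, which holds because $\langle r, v_h\rangle = f(v_h) - a(u_h, v_h) = 0$ for all $v_h \in V_h$. Substituting into the definition $B_m = S_m + A_h^{-1} I_h^* - S_m A A_h^{-1} I_h^*$ kills the last two terms, giving the crucial identity
\begin{equation*}
B_m r = S_m r.
\end{equation*}
Applying Theorem \ref{thm:Brnorm} with $B = B_m$ and replacing $B_m r$ by $S_m r$ on both sides then yields the two-sided bound in the corollary, completing the proof.
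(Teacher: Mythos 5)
Your proposal is correct and follows essentially the same route as the paper: the same error-propagation identity $u_{n+1}-u=(I-S_mA)(I-A_h^{-1}I_h^*A)(u_n-u)=(I-B_mA)(u_n-u)$, contraction via the Xu--Zikatanov result for the stable decomposition $V=V_h+\sum_{k=1}^N V_k$, and the Galerkin orthogonality $I_h^*r=0$ yielding $B_mr=S_mr$ so that Theorem~\ref{thm:Brnorm} applies. Your explicit remark that the contraction holds in the $A$-norm (i.e., $M=A$ in the SPD setting) is a reasonable clarification of what the paper leaves implicit in the generic $\|\cdot\|_M$ notation.
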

\begin{proof}
The contraction of \eqref{alg:twol_successive} follows from $V=V_h+\sum_{k=1}^NV_k$ and the classical result in \cite{XuZikatanov2002}. The iterative error of \eqref{alg:twol_successive} satisfies 
\begin{equation*}
u-u_{n+1}=(I-S_mA)(I-A_h^{-1}I_h^*A)(u-u_n)=(I-B_mA)(u-u_n).    
\end{equation*}
Then contraction property of the algorithm \eqref{alg:twol_successive} translates into $\|I-B_mA\|_M<1$. Due to $V_h\subset V$ and the definition of the Galerkin discretization \eqref{subeq:FEM}, we have $I_h^*r=0$ and thus $B_mr=S_mr.$ Using this identity and Theorem \ref{thm:Brnorm} completes the proof. 
\end{proof}
When $A$ is the SPD operator induced by the inner product of $H^1(\Omega)$, $H({\rm curl},\Omega)$, $H({\rm div},\Omega)$ or their finite element variants, the iteration \eqref{alg:twol_successive} with $S_m$ given by a stable decomposition is uniformly contractive under the norm $\|\bullet\|_A$, i.e., $\|I-B_mA\|_A<1$ is independent of the mesh size parameter $h$.

\subsection{Application to the Poisson Equation}\label{subsect:Poisson}
On a domain $\Omega\subset\mathbb{R}^d$, consider the Poisson boundary value problem 
\begin{equation}\label{eq:Poisson}
    \begin{aligned}
        -\Delta u&=f\quad\text{ in }\Omega,\\
        u&=0\quad\text{ on }\partial\Omega.
    \end{aligned}
\end{equation}
Let $(\bullet,\bullet)_D$ be the $L^2$ inner product on $D$ and $(\bullet,\bullet)=(\bullet,\bullet)_\Omega$. Let $\mathcal{P}_p$ be the space of polynomials of degree $\leq p$.
A typical application of Theorem \ref{thm:rSr} and Corollary \ref{cor:Srnorm} is implicit a posteriori error estimation of FEMs for \eqref{eq:Poisson}: $u\in H_0^1(\Omega)$ and $u_h\in V_h$ such that
\begin{subequations}\label{eq:varPoisson}
\begin{align}
a(u,v)=(\nabla u,\nabla v)&=(f,v),\quad v\in H_0^1(\Omega),\\
(\nabla u_h,\nabla v_h)&=(f,v_h),\quad v_h\in V_h,\label{eq:varPoisson_FEM}
\end{align}
\end{subequations}
where $V_h\subset H_0^1(\Omega)$ is a finite element space of continuous and piecewise $\mathcal{P}_1$ polynomials.
Let $\{a_{h,k}\}_{1\leq k\leq N_h}$ be the set of grid vertices in $\mathcal{T}_h$ and $\phi_{h,k}\in V_h$ the hat function associated to $a_{h,k}$. Then  $\Omega=\cup_{k=1}^{N_h}\Omega_{h,k}$ with $\Omega_{h,k}={\rm supp}(\phi_{h,k})$ is an overlapping domain decomposition and $H_0^1(\Omega)=V_h+\sum_{k=1}^{N_h}H_0^1(\Omega_{h,k})$ is a stable subspace decomposition {(cf.~\cite{LiZikatanov2021CAMWA}, Lemma 3.3)}. In this case, the estimator $S_a$ in Theorem \ref{thm:rSr} leads to 
\begin{equation}\label{eq:Poisson_localDiri_a}
|u-u_h|_{H^1(\Omega)}\eqsim\langle r,S_ar\rangle^\frac{1}{2}=\Big(\sum_{k=1}^{N_h}|\eta_k|_{H^1(\Omega_{h,k})}^2\Big)^\frac{1}{2},  
\end{equation}
where each $\eta_k\in H_0^1(\Omega_{h,k})$ is determined by the local Dirichlet problem
\[
(\nabla\eta_k,\nabla v)_{\Omega_{h,k}}=(f,v)_{\Omega_{h,k}}-(\nabla u_h,\nabla v)_{\Omega_{h,k}}
,\quad v\in H_0^1(\Omega_{h,k}).
\]

Let $S_m: H^{-1}(\Omega)\rightarrow H^1_0(\Omega)$ be a multiplicative smoother based on $V_k=H_0^1(\Omega_{h,k})$, $k=1, \ldots, N_h$, see Algorithm \ref{alg:SSC} with $V^*=H^{-1}(\Omega)$ and $V_k=H_0^1(\Omega_{h,k})$. Classical multigrid theory (cf.~\cite{Xu1992,XuZikatanov2017}) implies that $B_m$ in Corollary \ref{cor:Srnorm} is a uniform contraction, i.e., $|I-B_mA|_{H^1(\Omega)}$ ($A=-\Delta$) is uniformly bounded away from 1. Consequently, we obtain a multiplicative a posteriori error estimate
\begin{equation}\label{eq:Poisson_localDiri_m}
|u-u_h|_{H^1(\Omega)}\eqsim|S_mr|_{H^1(\Omega)}.    
\end{equation}
A posteriori error estimates \eqref{eq:Poisson_localDiri_a} and \eqref{eq:Poisson_localDiri_m} require solutions of infinite-dimensional local problems that can only be approximately solved. Let $$\widetilde{V}_k=\{v\in H_0^1(\Omega_{h,k}): v|_T\in\mathcal{P}_2\text{ for each }T\subset\Omega_{h,k}\text{ with }T\in\mathcal{T}_h\}$$ be the local $\mathcal{P}_2$ finite element space. 
One could approximately solve for each $\eta_k$ in \eqref{eq:Poisson_localDiri_a} by $\mathcal{P}_2$ element: find $\eta_k\in\widetilde{V}_k$ such that
\[
(\nabla\eta_k,\nabla v)_{\Omega_{h,k}}=(f,v)_{\Omega_{h,k}}-(\nabla u_h,\nabla v)_{\Omega_{h,k}}
,\quad v\in\widetilde{V}_k.
\]
The estimator \eqref{eq:Poisson_localDiri_m} could be implemented in a similar way. 

\section{Discrete Ambient Space}\label{sect:discreteV} 
To directly construct more practical and efficient a posteriori error estimates, we make use of finite-dimensional discrete ambient spaces $V$ arising from mesh refinement or polynomial enrichment. 

\subsection{Practical Estimators in H(grad)}\label{subsect:Hgrad}

Let $\mathcal{T}_{h/2}$ be a uniform refinement of $\mathcal{T}_h$ and $V=V_{h/2}$ be the finite element space based on the refined mesh $\mathcal{T}_{h/2}$. For simplicial meshes, $\mathcal{T}_{h/2}$ is constructed by dividing each element in $\mathcal{T}_h$ into four congruent children in $\mathbb{R}^2$ or by dividing each element in $\mathcal{T}_h$ into eight children (cf.~\cite{Bey2000}) in $\mathbb{R}^3$. 

\subsubsection{Pointwise Smoother-type Error Estimators}
Let $A_{h/2}: V_{h/2}\rightarrow V_{h/2}^*$ be given by
\begin{equation*}
    (A_{h/2}v)(w)=\langle A_{h/2}v,w\rangle=a(v,w),\quad v, w\in V_{h/2}.
\end{equation*}
Let $\mathbb{A}_{h/2}$ be the stiffness matrix of the bilinear form $a|_{V_{h/2}}$ and $S^a_{h/2}: V_{h/2}^*\rightarrow V_{h/2}$ be the operator corresponding to the inverse of the diagonal of $\mathbb{A}_{h/2}$. Corresponding to the stable decomposition $V_{h/2}=V_h+\sum_{a_{h/2,k}\in\mathring{\Omega}}{\rm span}(\phi_{h/2,k})$, the following
\[
B_{h/2}^a:=I_hA_h^{-1}I_h^*+S^a_{h/2}
\]
is a well-known uniform preconditioner for $A_{h/2}$, where $I_h: V_h\hookrightarrow V_{h/2}$ is the natural inclusion or coarse-to-fine prolongation by slight abuse of notation. Let $S^m_{h/2}: V_{h/2}^*\rightarrow V_{h/2}$ be the operator corresponding to the inverse of the upper triangular part of $\mathbb{A}_{h/2}$. Another well-known two-level multiplicative solver for $A_{h/2}u_{h/2}=f_{h/2}$ is  
\begin{align*}
\eta&=u_n+A_h^{-1}I_h^*(f-A_{h/2}u_n),\\
u_{n+1}&= \eta+S^m_{h/2}(f-A_{h/2}\eta),\quad n=0, 1, 2, \ldots
\end{align*}    
In the above iteration, $u_n$ converges to $u_{h/2}$ uniformly with respect to $h$ (cf.~\cite{XuZikatanov2017}). On the matrix level, $S^a_{h/2}$ and $S^m_{h/2}$ are nothing but one step of Jacobi and Gauss--Seidel smoothing pass for $\mathbb{A}_{h/2}$, respectively. On the analytic level, $S^a_{h/2}$ and $S^m_{h/2}$ amount to solving one-dimensional local problems and are referred to as pointwise smoothers. 

Let $r_{h/2}:=f-A_{h/2}u_h\in V_{h/2}^*$, namely,
\[
\langle r_{h/2},v\rangle=(f,v)-a(u_h,v),\quad\forall v\in V_{h/2}.
\]
It follows from Theorem \ref{thm:rSr} and Corollary \ref{cor:Srnorm} that 
\begin{subequations}\label{eq:uh_uh2}
 \begin{align}
    |u_{h/2}-u_h|_{H^1(\Omega)}&\eqsim\langle r_{h/2},S^a_{h/2}r_{h/2}\rangle^\frac{1}{2},\\
    |u_{h/2}-u_h|_{H^1(\Omega)}&\eqsim|S^m_{h/2}r_{h/2}|_{H^1(\Omega)}.
\end{align}   
\end{subequations}

However, this section is devoted to a posteriori error estimation of $|u-u_h|_{H^1(\Omega)}$ other than $|u_{h/2}-u_h|_{H^1(\Omega)}$. To achieve our goal, we make use of the saturation assumption on fine-coarse meshes:
\begin{equation}\label{eq:saturation}
    \|u-u_{h/2}\|_V\leq\gamma_{h/2}\|u-u_h\|_V
\end{equation}
for some uniform constant $\gamma_{h/2}<1$. Here $\|\bullet\|_V=|\bullet|_{H^1(\Omega)}$ but $\|\bullet\|_V$ could be other norms in the following sections. Assumption \eqref{eq:saturation} is common in error analysis of FEMs (cf.~\cite{BankWeiser1985,Bank1996,BankOvall2017}) and has been proved for the Poisson equation in, e.g., \cite{DorflerNochetto2002,CarstensenGallistlGedicke2016}. It follows from the triangle inequality
\begin{equation*}
\begin{aligned}
&|u-u_h|_{H^1(\Omega)}\leq|u-u_{h/2}|_{H^1(\Omega)}+|u_{h/2}-u_h|_{H^1(\Omega)}\\  
&    |u-u_h|_{H^1(\Omega)}\geq|u_{h/2}-u_h|_{H^1(\Omega)}-|u-u_{h/2}|_{H^1(\Omega)}
\end{aligned}
\end{equation*}
and assumption \eqref{eq:saturation} that 
\begin{equation}\label{eq:error_uhuh2}
    \frac{1}{1+\gamma_{h/2}}|u_{h/2}-u_h|_{H^1(\Omega)}\leq|u-u_h|_{H^1(\Omega)}\leq\frac{1}{1-\gamma_{h/2}}|u_{h/2}-u_h|_{H^1(\Omega)}.
\end{equation}
Combining \eqref{eq:uh_uh2} and \eqref{eq:error_uhuh2}, we obtain 
\begin{subequations}\label{eq:Poisson_finecoarse}
    \begin{align}
    |u-u_h|_{H^1(\Omega)}&\eqsim\langle r_{h/2},S^a_{h/2}r_{h/2}\rangle^\frac{1}{2},\label{eq:Poisson_finecoarse_a}\\
    |u-u_h|_{H^1(\Omega)}&\eqsim|S^m_{h/2}r_{h/2}|_{H^1(\Omega)}.
\end{align}
\end{subequations}

Alternatively, we can construct smoothers based on polynomial enrichment.
Replacing $V=V_{h/2}$ with the finite element space $V=V_{h,\mathcal{P}_2}$ based on piecewise quadratic polynomials on $\mathcal{T}_h$, we obtain another smoother-type a posteriori error estimate. Let  $S_{h,\mathcal{P}_2}^a: V_{h,\mathcal{P}_2}^*\rightarrow V_{h,\mathcal{P}_2}$ and $S^m_{h,\mathcal{P}_2}: V_{h,\mathcal{P}_2}^*\rightarrow V_{h,\mathcal{P}_2}$ be the operator corresponding to the inverse of the diagonal and the inverse of the upper triangular part of the $\mathcal{P}_2$ finite element matrix on $\mathcal{T}_h$, respectively. Let $u_{h,\mathcal{P}_2}$ be the finite element solution in $V_{h,\mathcal{P}_2}$ and $A_{h,\mathcal{P}_2}: V_{h,\mathcal{P}_2}\rightarrow V_{h,\mathcal{P}_2}^*$ be given by
\begin{equation*}
    (A_{h,\mathcal{P}_2}v)(w)=\langle A_{h,\mathcal{P}_2}v,w\rangle=a(v,w),\quad v, w\in V_{h,\mathcal{P}_2}.
\end{equation*}
Under the following $p$-version saturation assumption (cf.~\cite{CanutoNochettoStevensonVerani2019})
\begin{equation}\label{eq:saturation_P2}
    \|u-u_{h,\mathcal{P}_2}\|_V\leq\gamma_{\mathcal{P}_2}\|u-u_h\|_V,\quad\gamma_{\mathcal{P}_2}<1,
\end{equation}
we obtain smoother-type error estimators based on a low-high polynomial degree finite element pair:
\begin{subequations}\label{eq:Poisson_P1P2}
\begin{align}
    |u-u_h|_{H^1(\Omega)}&\eqsim\langle r_{\mathcal{P}_2},S^a_{h,\mathcal{P}_2}r_{\mathcal{P}_2}\rangle^\frac{1}{2},\label{eq:Poisson_P1P2_a}\\
    |u-u_h|_{H^1(\Omega)}&\eqsim|S^m_{h,\mathcal{P}_2}r_{\mathcal{P}_2}|_{H^1(\Omega)},
\end{align}
\end{subequations}
where $r_{\mathcal{P}_2}=f-A_{h,\mathcal{P}_2}u_h\in V_{h,\mathcal{P}_2}^*$.

\begin{remark}\label{rem:extrapolation}
The implicit extrapolation technique \cite{jung1996implicit} provides an efficient way to assemble a $\mathcal{P}_2$ smoother matrix $\mathbb{S}_{h,\mathcal{P}_2}$ via $\mathbb{S}_{h,\mathcal{P}_2}=(4/3)\mathbb{S}_{h/2}- {(1/3)\tilde{\mathbb{S}}_{h}}$, where $\mathbb{S}_{h/2}$ and $\mathbb{S}_{h}$ represent the $\mathcal{P}_1$ smoothers $S_{h/2}$ and $S_h$, {and $\tilde{\mathbb{S}}_{h}$ is the embedding of $\mathbb{S}_h$ into the fine grid matrix size, with zeros assigned to the nodes present in the fine grid $\mathcal{T}_{h/2}$ but not in the coarse grid $\mathcal{T}_h$.}
\end{remark}

\begin{figure}[!th]
    \centering
    \includegraphics[width=7.5cm]{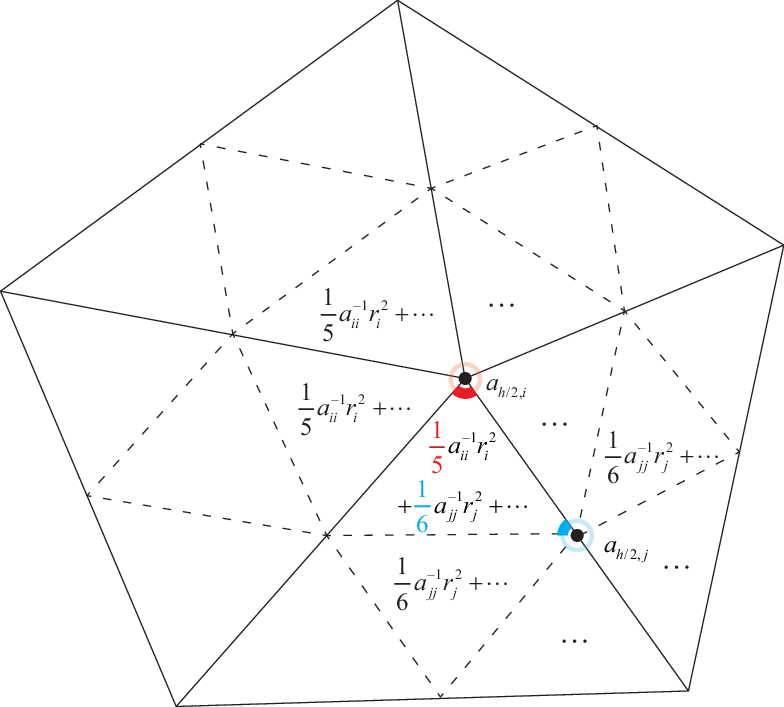} 
\caption{Assignment of $\langle r_{h/2},S^a_{h/2}r_{h/2}\rangle$ to each element in the refinement $\mathcal{T}_{h/2}$.}\label{fig:patch}
\end{figure}

\subsubsection{Localized Error Estimator and Adaptive Algorithm}
Let $\mathcal{V}_{h/2}$ be the set of interior vertices in $\mathcal{T}_{h/2}$. Let $\mathbf{r}_{h/2}=(r_i)_{1\leq i\leq M}$ be the coordinate vector of $r_{h/2}$ with $M=\#\mathcal{V}_{h/2}$ and $\mathbb{D}_{h/2}={\rm diag}(\mathbb{A}_{h/2})=(a_{ii})_{1\leq i\leq M}$. Then the algebraic expression of the estimator in \eqref{eq:Poisson_finecoarse_a} is 
\[
\langle r_{h/2},S^a_{h/2}r_{h/2}\rangle=\mathbf{r}^\top\mathbb{D}_{h/2}^{-1}\mathbf{r}=\sum_{a_{h/2,i}\in\mathcal{V}_{h/2}}a_{ii}^{-1}r_i^2.\]
To guide the local mesh refinement, each $a_{ii}^{-1}r_i^2$ in 
$\langle r_{h/2},S^a_{h/2}r_{h/2}\rangle$ is equally subdivided {(according to the vertex degree ${\rm deg}(a_{h/2,i})$, i.e., the number of edges in $\mathcal{T}_{h/2}$ that are incident to $a_{h/2,i}$)} and allocated to all elements in $\mathcal{T}_{h/2}$ touching the vertex $a_{h/2,i}\in\mathcal{V}_{h/2}$, see Figure \ref{fig:patch} for example. The localized estimator for each $T\in\mathcal{T}_h$ reads
\begin{equation*}
\mathcal{E}_T=\left(\sum_{t\subset T, t\in\mathcal{T}_{h/2}}\sum_{\substack{a_{h/2,i}\in\bar{t}\\ a_{h/2,i}\in\mathcal{V}_{h/2}}}\frac{1}{{\rm deg}(a_{h/2,i})}a_{ii}^{-1}r_{i}^2\right)^\frac{1}{2}.
\end{equation*}
The error estimator in \eqref{eq:Poisson_P1P2_a} is localized to each triangle of $\mathcal{T}_h$ in a similar way. Naturally, $|S^m_{h/2}r_{h/2}|_{H^1(\Omega)}$ and $|S^m_{h,\mathcal{P}_2}r_{\mathcal{P}_2}|_{H^1(\Omega)}$ are localized based on the following splitting:
\begin{align*}
|S^m_{h/2}r_{h/2}|_{H^1(\Omega)}&=\big(\sum_{T\in\mathcal{T}_h}|S^m_{h/2}r_{h/2}|_{H^1(T)}^2\big)^\frac{1}{2},\\ |S^m_{h,\mathcal{P}_2}r_{\mathcal{P}_2}|_{H^1(\Omega)}&=\big(\sum_{T\in\mathcal{T}_h}|S^m_{h,\mathcal{P}_2}r_{\mathcal{P}_2}|_{H^1(T)}^2\big)^\frac{1}{2}.    
\end{align*}

\begin{algorithm}[!ht]
\caption{AFEM driven by Smoother-type Error Estimator}
    \label{alg:smoother_afem}
        \begin{algorithmic}[1] 
            \STATE \textbf{Input:} current mesh $\mathcal{T}_h$;
            
            \textbf{// 1. SOLVE}
            \STATE Solve the finite element system on $\mathcal{T}_h$ to obtain $u_h$; \\
            
            \textbf{// 2. ESTIMATE} 
            \STATE Construct auxiliary fine mesh $\mathcal{T}_{h/2}$;
            \STATE Compute fine-grid problem-specific residual $r_{h/2}\in V_{h/2}^*$: $\langle r_{h/2}, v \rangle = (f, v) - a(u_h, v), \quad \forall v \in V_{h/2}$;
            \STATE Assemble the smoother $S_{h/2}$ and compute $\langle r_{h/2},S_{h/2}r_{h/2}\rangle$ or $\|S_{h/2}r_{h/2}\|$;\\
            
            \textbf{// 3. ADAPT}
            \STATE Localize $\langle r_{h/2},S_{h/2}r_{h/2}\rangle$ or $\|S_{h/2}r_{h/2}\|$ to obtain $\{\mathcal{E}_T\}_{T \in \mathcal{T}_{h}}$;
            \STATE Mark a $\mathcal{M}_h \subset \mathcal{T}_h$ {with minimal cardinality such that} $\sum_{T \in \mathcal{M}_h} \mathcal{E}_T^2 \ge \theta^2 \sum_{T \in \mathcal{T}_h} \mathcal{E}_T^2$,~~  $\theta\in(0,1)$;
            \STATE Update $\mathcal{T}_h$ by refining $\mathcal{M}_h$ and neighboring elements based on newest vertex bisection, go to Step 1;
        \end{algorithmic}
    \end{algorithm}

Algorithm \ref{alg:smoother_afem} outlines an adaptive mesh refinement procedure driven by smoother-type a posteriori error estimates. It is characterized by several inexpensive operations on an auxiliary fine mesh, which avoid solving any global system. In particular, the residual evaluation, construction of smoothers, and smoothing of the residual all exhibit linear complexity. The adaptive feedback loop follows the classical AFEM framework (cf.~\cite{Dorfler1996,CKNS2008,BonitoCanutoNochettoVeeser2024}) based on D\"orfler's marking and newest vertex bisection rule. 

In all numerical experiments, the marking parameter $\theta=0.5$ in Algorithm \ref{alg:smoother_afem} will be used. Although the PDE, the domain, the finite element space $V_h$, the residual $r_h \in V_h^*$, and the estimator $\mathcal{E}_T$ may vary from case to case, we adhere to the general adaptive workflow outlined in Algorithm \ref{alg:smoother_afem}. Numerical experiments were conducted in MATLAB 2024b, with linear systems solved by the direct solver {\texttt{mldivide}}. 
The quantity shown in parentheses in each figure legend is the effectivity ratio, i.e., the value of estimator/error {on the finest mesh generated by the final AFEM iteration}.

\subsubsection{Numerical Experiments in H(grad)}\label{subsect:NE_Hgrad}
On the L-shaped domain $\Omega=(-1,1)^2\backslash([0,1)\times[-1,0))$, we test the performance of the proposed a posteriori error estimates for the FEM \eqref{eq:varPoisson_FEM}. The exact solution is $u=r^{2/3}\sin(2\theta/3)$ with singularity near the re-entrant corner of $\Omega$, where $(r,\theta)$ is the polar coordinate near the origin. All quantities are computed on adaptively generated meshes by Algorithm \ref{alg:smoother_afem}. The smoother-type a posteriori error estimates as well as computable discrete approximations of implicit estimators \eqref{eq:Poisson_localDiri_a} and \eqref{eq:Poisson_localDiri_m} based on local $\mathcal{P}_2$ resolution are compared to the explicit residual error estimator 
\begin{equation}\label{eq:residualEstimate}
    \mathcal{E}_{\rm res}=\Big(\sum_{T\in\mathcal{T}_h}h_T^2\|f+\Delta u_h\|_{L^2(T)}^2+\sum_{\text{edge }E\subset\mathring{\Omega}}h_E\|\llbracket\partial_nu_h\rrbracket\|^2_{L^2(E)}\Big)^\frac{1}{2},
\end{equation}
where $h_T={\rm diam}(T)$, $h_E={\rm diam}(E)$, and $\llbracket\partial_nu_h\rrbracket|_E$ is the jump of the normal derivative $\partial_nu_h$ across an edge $E$.

\begin{figure}[!ht]
    \centering
    \includegraphics[width=5.5cm]{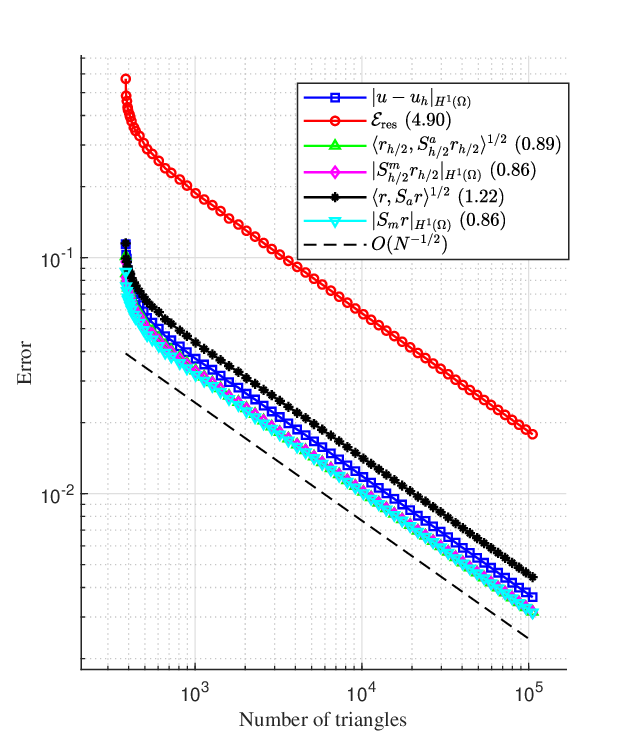} 
     \includegraphics[width=5.5cm]{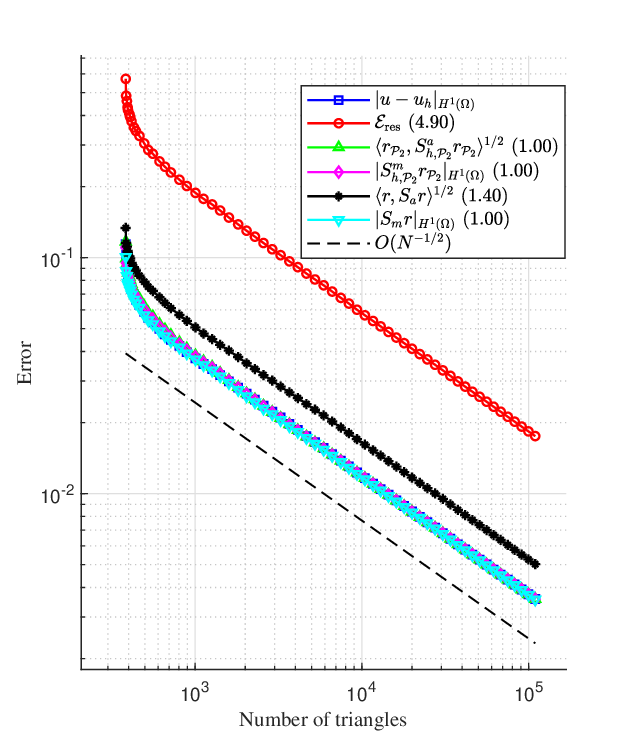}
\caption{Convergence of error estimators based on fine-coarse (left) and $\mathcal{P}_2$-$\mathcal{P}_1$ (right) layers.}\label{fig:Poisson}
\end{figure}

\begin{figure}[!ht]
        \centering
        \includegraphics[width=5.5cm]{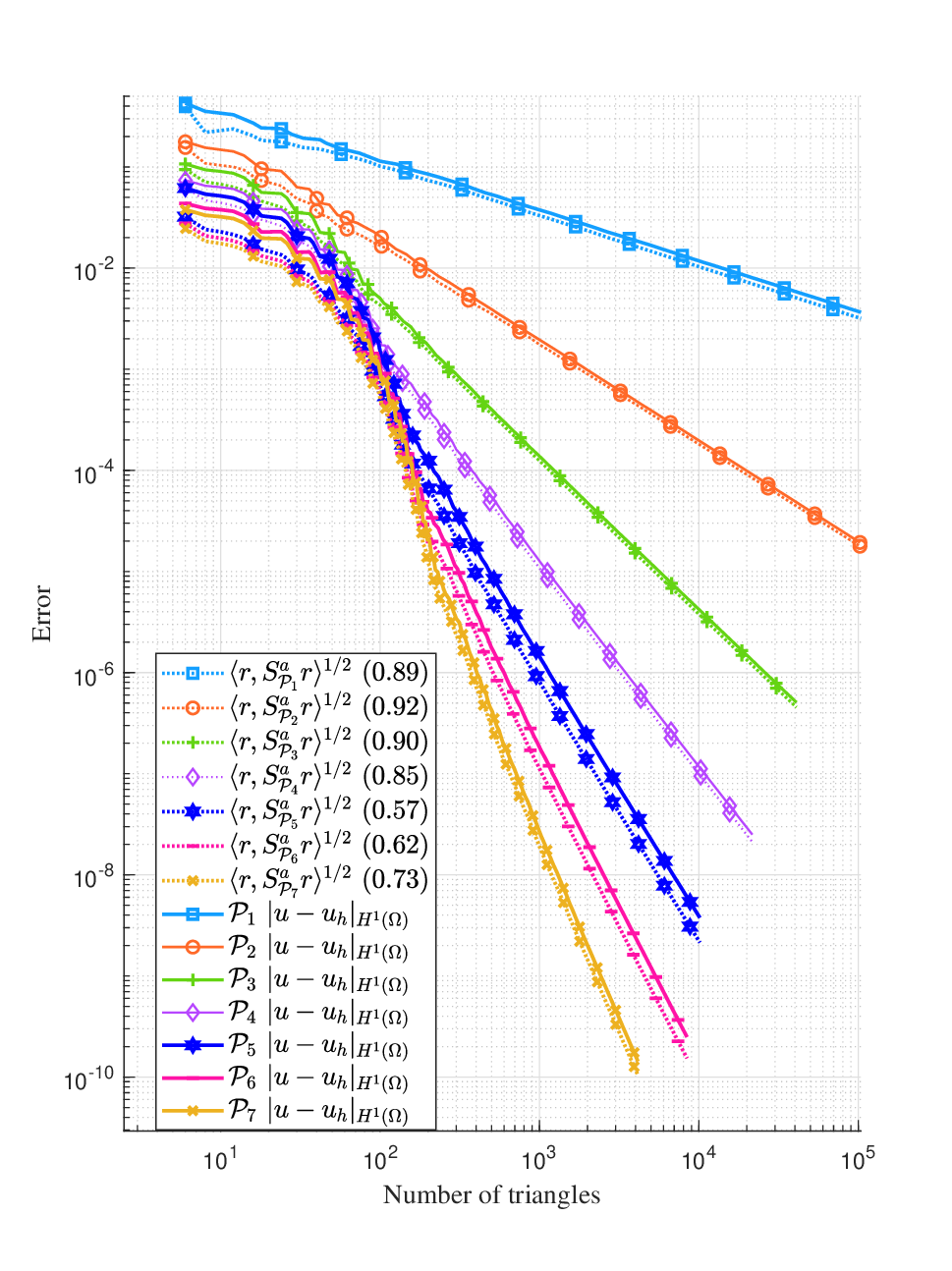} 
        \includegraphics[width=5.5cm]{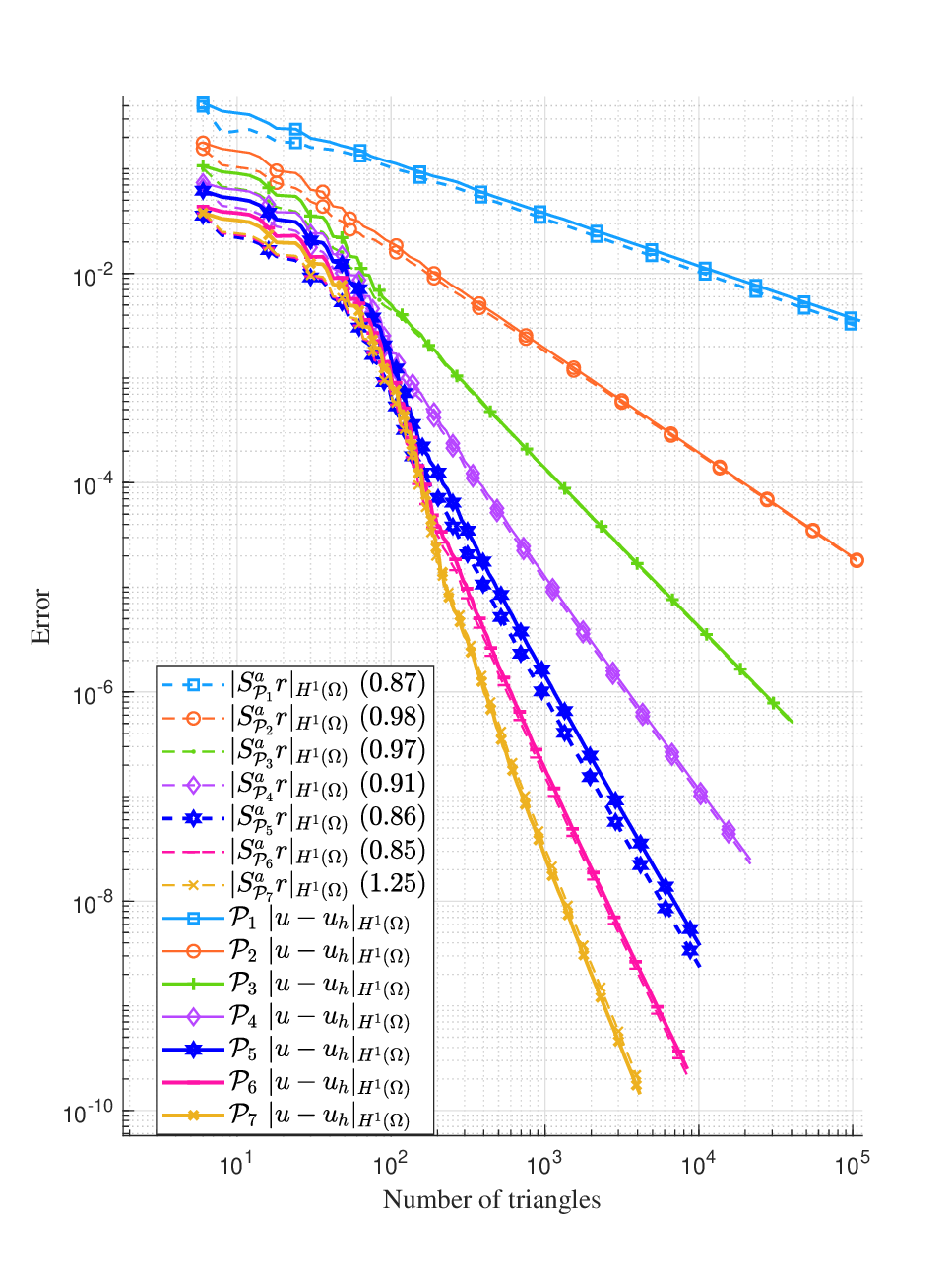}
    \caption{Convergence of $\mathcal{P}_p$-AFEMs driven by pointwise smoother-type error estimators.}
    \label{fig:Poisson_Pk}
\end{figure}

\begin{remark}[Inhomogeneous essential boundary condition in H(grad)]
For inhomogeneous and piecewise polynomial boundary data $u|_{\partial\Omega}=g_h\in V_h|_{\partial\Omega}$, the finite element solution $u_h\in V_h+g_h$ exactly matches $u$ and $u_{h/2}$ on the boundary. As a result, $u-u_h\in H_0^1(\Omega)={\rm Dom}(-\Delta)$ and $u_{h/2}-u_h\in V_{h/2}={\rm Dom}(A_{h/2})$ are contained in the domain of $-\Delta: H_0^1(\Omega)\rightarrow H^{-1}(\Omega)$ and $A_{h/2}$, respectively. In this case, the same smoothers for $-\Delta$ and $A_{h/2}$ in Sections \ref{subsect:Poisson} and \ref{subsect:Hgrad} are applied to the residual to obtain smoother-type a posteriori error estimates. For general boundary data $u|_{\partial\Omega}=g$, {we implement the discrete boundary condition $u_h|_{\partial\Omega}=g_h$ with $g_h\in V_h|_{\partial\Omega}$ being the nodal interpolant of $g$. Rigorously speaking, the data oscillation ${\rm osc}_\partial=\big(\sum_{{\rm edge}~E\subset\partial\Omega}h_E|g-g_h|_{H^1(E)}^2\big)^\frac{1}{2}$ for 2D problems should be incorporated into both smoother-type and other classical error estimators to obtain rigorous a posteriori error upper bounds (cf.~\cite{MNS2003,FeischlPagePraetorius2014}). In practice, the term ${\rm osc}_\partial$ is of higher order than the FEM error and therefore does not significantly influence the performance of AFEMs. For simplicity of presentation, we omit ${\rm osc}_\partial$ from all error estimators under comparison in the subsequent experiments.}
\end{remark}

We plot error curves for various AFEMs driven by different error estimators to assess convergence.
It is observed from Figure \ref{fig:Poisson} that the smoother-type estimators are more accurate than $\mathcal{E}_{\rm res}$. Meanwhile, they are no worse than the more expensive error estimators \eqref{eq:Poisson_localDiri_a} and \eqref{eq:Poisson_localDiri_m} based on local Dirichlet problems on vertex patches. In the comparison, the most efficient a posteriori estimates are based on diagonal smoothers $S_{h/2}^a$ and $S_{h,\mathcal{P}_2}^a$.

Smoother-type a posteriori error estimates could be directly generalized to higher order finite elements. Let  $S_{\mathcal{P}_p}^a: V_{\mathcal{P}_p}^*\rightarrow V_{\mathcal{P}_p}$ and $S^m_{\mathcal{P}_p}: V_{\mathcal{P}_p}^*\rightarrow V_{\mathcal{P}_p}$ be the operators corresponding to the inverse of the diagonal and the inverse of the upper triangular part of the $\mathcal{P}_p$ finite element matrix on $\mathcal{T}_{h/2}$, respectively. With $r=r_{h/2}$, the polynomial-degree-robustness ($p$-robustness) of smoother-type estimators $\langle r,S_{\mathcal{P}_p}^ar\rangle^\frac{1}{2}$ and  $|S_{\mathcal{P}_p}^ar|_{H^1(\Omega)}$ is numerically demonstrated in Figure \ref{fig:Poisson_Pk} and Table \ref{tab:estimator_compare}. In addition, Table \ref{tab:estimator_compare} reports effectivity ratios of the residual estimator $\mathcal{E}_{\rm res}$ and the equilibrated estimator $\mathcal{E}_{\rm eq}$ in \cite{ErnVohralik2015} for controlling the $\mathcal{P}_p$ finite element errors with $1\leq p\leq7$. The numerical results confirm that $\mathcal{E}_{\rm res}$ is not $p$-robust: $\mathcal{E}_{\rm res}/|u-u_h|_{H^1(\Omega)}$ grows at least linearly as $p$ increases. In contrast, $\mathcal{E}_{\mathrm{eq}}$ provides sharp effectivity ratios $\mathcal{E}_{\rm eq}/|u-u_h|_{H^1(\Omega)}$ close to 1, albeit at the cost of a more involved, implicit patchwise flux reconstruction.

Inspired by multigrid methods that perform multiple smoothing on each level, we consider a smoother-type error estimator $|S^{m,\text{iter}}_{\mathcal{P}_p}r|_{H^1(\Omega)}$, which corresponds to applying iter steps of Gauss–Seidel smoothing to the residual. Table \ref{tab:smoother_iter} shows that raising iter from 1 to 4 yields only a modest gain in the effectivity ratio. Therefore, we adopt a single smoothing pass in all subsequent experiments.

Compared to $\langle r_{h/2},S_{h/2}^ar_{h/2}\rangle^\frac{1}{2}$, we note that $|S_{h/2}^ar_{h/2}|_{H^1(\Omega)}$ is more flexible as it is easily adapted to other norms. For instance, the smoother-type estimator $\|S_{h/2}^ar_{h/2}\|_{L^2(\Omega)}$ is able to guide the adaptive FEM for reducing the $L^2$ error $\|u-u_h\|_{L^2(\Omega)}$, see Figure~\ref{fig:PoissonFineCoarseL2}. Moreover, the a posteriori error bound $\|S_{h/2}^ar_{h/2}\|_{L^2(\Omega)}$ is sharper than the following  residual error estimator 
\[
\mathcal{E}_{{\rm res},L^2}=\Big(\sum_{T\in\mathcal{T}_h}h_T^4\|f+\Delta u_h\|_{L^2(T)}^2+\sum_{\text{edge }E\subset\mathring{\Omega}}h_E^3\|\llbracket\partial_nu_h\rrbracket\|^2_{L^2(E)}\Big)^\frac{1}{2}
\]
for controlling the $L^2$ norm error (cf.~\cite{DemlowStevenson2011}).

\begin{figure}[!ht]
\centering
\includegraphics[width=5.5cm]{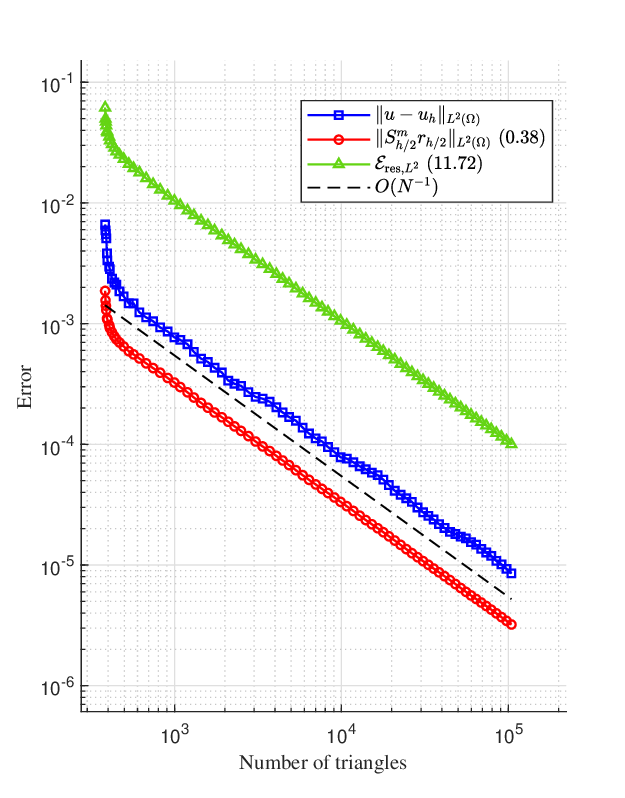} 
\includegraphics[width=5.5cm]{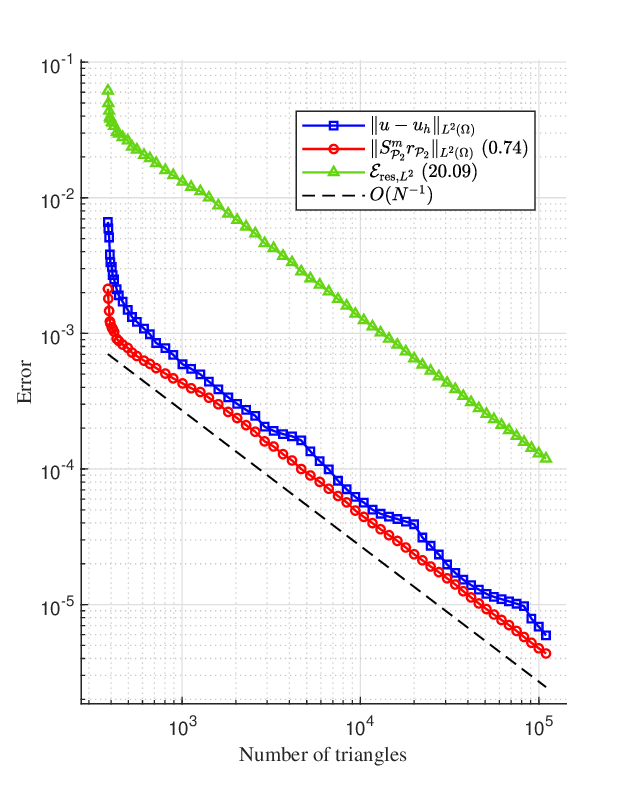}
\caption{Adaptive error estimator convergence in the $L^2$ norm for the Poisson equation in fine-coarse layers (left) and $\mathcal{P}_2$-$\mathcal{P}_1$ layers (right), where $N$ is the number of triangles.}
\label{fig:PoissonFineCoarseL2}
\end{figure}

\begin{table}[htbp]
  \centering
{\caption{Effectivity ratios of smoother-type $\langle r, S^a_{\mathcal{P}_p} r \rangle^{1/2}$, $|S^{a}_{\mathcal{P}_p}r|_{H^1(\Omega)}$ and $|S^{m}_{\mathcal{P}_p}r|_{H^1(\Omega)}$, residual $\mathcal{E}_{\rm{res}}$, and equilibrated $\mathcal{E}_{\mathrm{eq}}$ error estimators {(converged values computed on the finest asymptotic-regime mesh)}.}
   \label{tab:estimator_compare}
  \begin{tabular}{l c c c c c c c}
    \toprule
    \multirow{2}{*}{Effectivity ratio} & \multicolumn{7}{c}{Polynomial Order} \\
    \cmidrule(lr){2-8}
     & $\mathcal{P}_1$ & $\mathcal{P}_2$ & $\mathcal{P}_3$ & $\mathcal{P}_4$ & $\mathcal{P}_5$ & $\mathcal{P}_6$ & $\mathcal{P}_7$ \\
    \midrule
    $|S^{m}_{\mathcal{P}_p}r|_{H^1(\Omega)}/|u-u_h|_{H^1(\Omega)}$ 
     & 0.8632 & 0.9067 & 0.8279 & 0.7641 & 0.7183 & 0.7391 & 0.7988 \\
    \midrule
    $\langle r, S^a_{\mathcal{P}_p} r \rangle^{1/2}/|u-u_h|_{H^1(\Omega)}$ 
      & 0.8899 & 0.9184 & 0.8953 & 0.8464 & 0.5690 & 0.6166 & 0.7255 \\
    $|S^{a}_{\mathcal{P}_p}r|_{H^1(\Omega)}/|u-u_h|_{H^1(\Omega)}$ 
      & 0.8705 & 0.9754 & 0.9665 & 0.9069 & 0.8610 & 0.8481 & 1.2495\\
    \midrule
    $\mathcal{E}_{\rm{res}}/|u-u_h|_{H^1(\Omega)}$ 
      & 4.9003 & 7.7551 & 12.8526 & 19.5604 & 26.8670 & 34.6710 & 43.0182 \\
    \midrule
    $\mathcal{E}_{\mathrm{eq}}/|u-u_h|_{H^1(\Omega)}$ 
      & 1.0608 & 1.0230 & 1.0250 & 1.0365 & 1.0450 & 1.0520 & 1.0507\\
    \bottomrule
  \end{tabular}
 }
\end{table}

\begin{table}[!th]
  \centering
{\caption{Effectivity ratios of the smoother-type error estimator $|S^{m,\text{iter}}_{\mathcal{P}_p}r|_{H^1(\Omega)}$ based on $1\leq\text{iter}\leq4$ steps of Gauss--Seidel smoothing operations {(converged values computed on the finest asymptotic-regime mesh)}.}
  \label{tab:smoother_iter} 
  \begin{tabular}{c c c c c c c c}
    \toprule
    \multirow{2}{*}{Iter.} & \multicolumn{7}{c}{Polynomial Order} \\
    \cmidrule(lr){2-8}
     & $\mathcal{P}_1$ & $\mathcal{P}_2$ & $\mathcal{P}_3$ & $\mathcal{P}_4$ & $\mathcal{P}_5$ & $\mathcal{P}_6$ & $\mathcal{P}_7$ \\
    \midrule
    1 & 0.8632 & 0.9067 & 0.8279 & 0.7641 & 0.7183 & 0.7391 & 0.7988 \\
    2 & 0.8641 & 0.9423 & 0.9143 & 0.8688 & 0.8304 & 0.7961 & 0.7378 \\
    3 & 0.8643 & 0.9405 & 0.9574 & 0.9185 & 0.8893 & 0.8415 & 0.7833 \\
    4 & 0.8643 & 0.9480 & 0.9759 & 0.9421 & 0.9130 & 0.8627 & 0.8038 \\
    \bottomrule
  \end{tabular}
 }
\end{table}

\begin{remark}
The theoretical $p$-robustness of smoother-type a posteriori error estimates introduced in this work remains an open question because the stable decomposition and contraction in the analysis are not proved to be $p$-robust. Currently, only equilibrated error estimators have been proven to be $p$-robust for specific problems (cf.~\cite{BraessPillweinSchoberl2009,ErnVohralik2020,ChaumontVohralik2023,Li2025arxiv}), owing to their carefully designed patchwise local problems and technical polynomial lifting tools.
\end{remark}

\subsection{A Posteriori Error Estimates in H(curl)}\label{subsect:Hcurl} Smoother-type a posteriori error estimation could be generalized to vector-valued finite elements. In $\mathbb{R}^d$ with $d=2$ or 3, we consider the problem: find $u\in H({\rm curl},\Omega)$ and $u_h\in V_h\subset H({\rm curl},\Omega)$
\begin{subequations}\label{eq:curlcurl}
\begin{align}
a(u,v)=({\rm curl} u,{\rm curl}v)+(u,v)&=(f,v),\quad v\in H({\rm curl},\Omega),\\
\langle A_h({\rm curl})u_h,v\rangle=({\rm curl}u_h,{\rm curl}v)+(u_h,v)&=(f,v),\quad v\in V_h=V_h(\rm curl),\label{eq:curlcurlFEM}
\end{align}
\end{subequations}
where $V_h(\rm curl)$ is a N\'ed\'elec edge element space, and  {$H({\rm curl},\Omega)=\{v\in[L^2(\Omega)]^{d}: {\rm curl} v\in[L^2(\Omega)]^{d(d-1)/2}\}$} is equipped with the norm $\|\bullet\|_{H(\rm curl,\Omega)}=\big(\|\bullet\|_{L^2(\Omega)}^2+\|{\rm curl}\bullet\|_{L^2(\Omega)}^2\big)^\frac{1}{2}$. Here ${\rm curl} v=(\frac{\partial v_3}{\partial x_2}-\frac{\partial v_2}{\partial x_3},\frac{\partial v_1}{\partial x_3}-\frac{\partial v_3}{\partial x_1},\frac{\partial v_2}{\partial x_1}-\frac{\partial v_2}{\partial x_2})$ in $\mathbb{R}^3$ and ${\rm curl}v=\frac{\partial v_2}{\partial x_1}-\frac{\partial v_1}{\partial x_2}$ in $\mathbb{R}^2$.

Smoothers used in Section \ref{subsect:Hgrad} are called pointwise smoothers because they are equivalent to solving one-dimensional local problems on each vertex patch on $\mathcal{T}_{h/2}$. However, pointwise smoothers are inefficient for $H(\rm curl)$ and $H(\rm div)$ problems (cf.~\cite{ArnoldFalkWinther2000,Hiptmair1999SINUM}). Therefore, we consider a block smoother based on the following vertex-oriented subspace decomposition (cf.~\cite{ArnoldFalkWinther2000})
\[
V_{h/2}({\rm curl})=\sum_{k=1}^{N_{h/2}}\big(V_k({\rm curl})=\{v\in V_{h/2}({\rm curl}): {\rm supp}(v)\subset\Omega_{h/2,k}\}\big).
\]
Let
$r=f-A_{h/2}({\rm curl})u_h\in V_{h/2}(\rm curl)^*$.
Let $S_a({\rm curl}): V_{h/2}({\rm curl})^*\rightarrow V_{h/2}({\rm curl})$ be the additive block smoother on $\mathcal{T}_{h/2}$ defined by $S_a({\rm curl})r=\sum_{k=1}^{N_{h/2}}\eta_k$ with each $\eta_k\in V_k({\rm curl})$ solving
\begin{align*}
&({\rm curl}\eta_k,{\rm curl}v)_{\Omega_{h/2,k}}+(\eta_k,v)_{\Omega_{h/2,k}}\\
&=\langle r,v\rangle=(f,v)_{\Omega_{h/2,k}}-({\rm curl} u_h,{\rm curl}v)_{\Omega_{h/2,k}}-(u_h,v)_{\Omega_{h/2,k}}
,\quad v\in V_k({\rm curl}).    
\end{align*}
Let $I_h: V_h({\rm curl})\hookrightarrow V_{h/2}({\rm curl})$ be the inclusion.
It has been shown in \cite{ArnoldFalkWinther2000} that $V_{h/2}({\rm curl})=V_h({\rm curl})+\sum_{k=1}^{N_{h/2}}V_k({\rm curl})$ is a stable subspace decomposition and $$B_a({\rm curl})=I_hA_h({\rm curl})^{-1}I_h^*+S_a({\rm curl})$$ is a uniform preconditioner for $A_{h/2}({\rm curl})$. As a result, the block smoother $S_a=S_a({\rm curl})$ in Theorem \ref{thm:rSr} leads to a uniform a posteriori error estimate 
\begin{equation}\label{eq:additive_Hcurl}
\|u-u_h\|_{H({\rm curl},\Omega)}\eqsim\langle r,S_a({\rm curl})r\rangle^\frac{1}{2}=\Big(\sum_{k=1}^{N_{h/2}}\|\eta_k\|^2_{H({\rm curl},\Omega_{h/2,k})}\Big)^\frac{1}{2}.
\end{equation}
Similarly, the smoother $S_m=S_m({\rm curl})$ in Algorithm \ref{alg:SSC} and Corollary \ref{cor:Srnorm} lead to 
\begin{equation}\label{eq:multiplicative_Hcurl}
\|u-u_h\|_{H({\rm curl},\Omega)}\eqsim\|S_m({\rm curl})r\|_{H({\rm curl},\Omega)}.
\end{equation}
Here $S_a({\rm curl})$ and $S_m({\rm curl})$ are called block smoothers because their implementations require solving nontrivial local problems on each vertex patch $\Omega
_{h/2,k}$.

\begin{figure}[thp]
    \centering
\includegraphics[width=5.5cm]{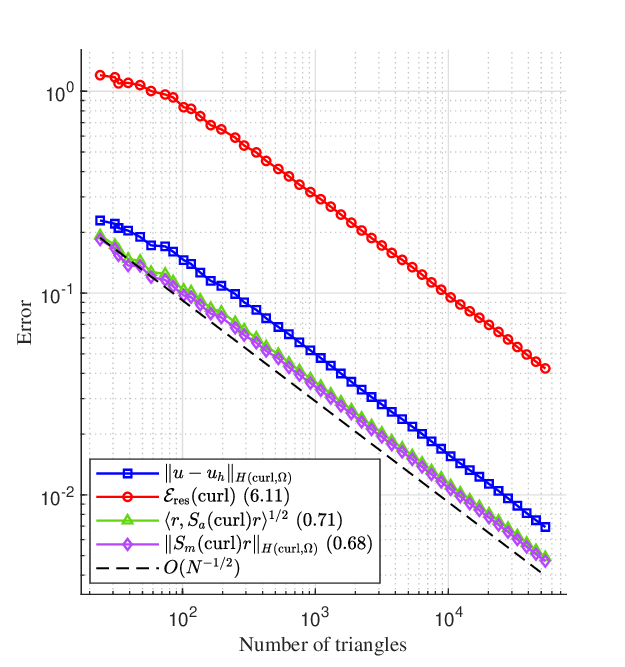} 
    \includegraphics[width=5.5cm]{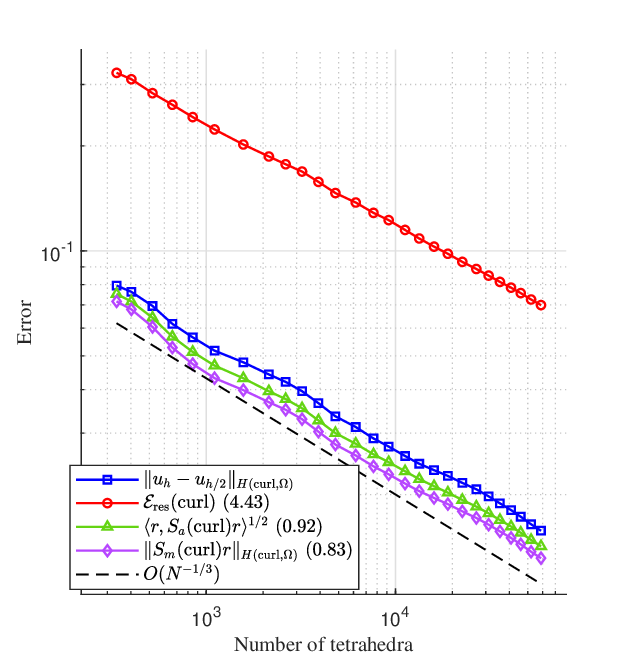}
\caption{Convergence of error estimators for $H({\rm curl})$ problems in 2D (left) and 3D (right).}\label{fig:curlcurl}
\end{figure}

\begin{figure}[th]
    \centering   
    \includegraphics[width=5.5cm]{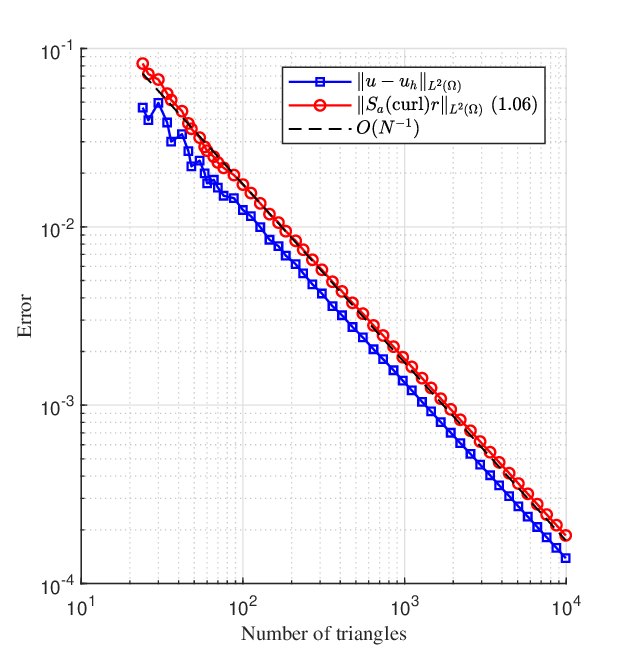}\includegraphics[width=5.5cm]{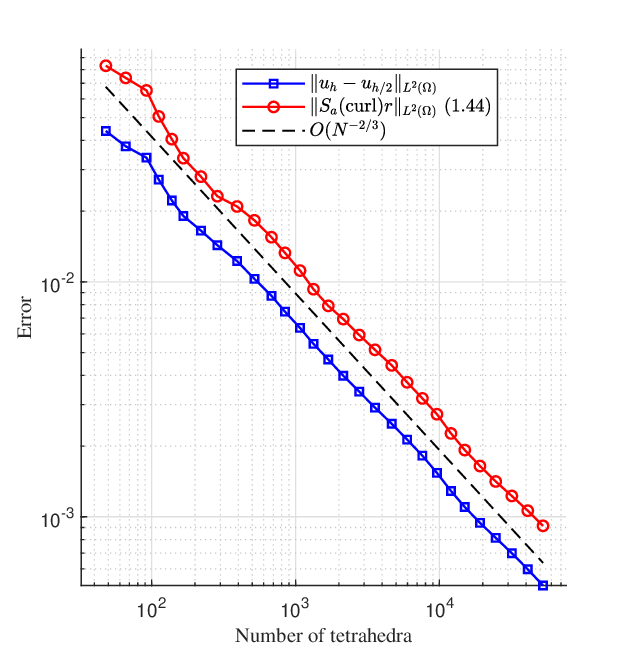}
    \caption{Convergence of a posteriori error estimates for controlling the $L^2$ norm error in 2D (left) and 3D (right).}
    \label{fig:curlcurlL2}
\end{figure}

\begin{figure}[th]
    \centering   
    \includegraphics[width=6cm]{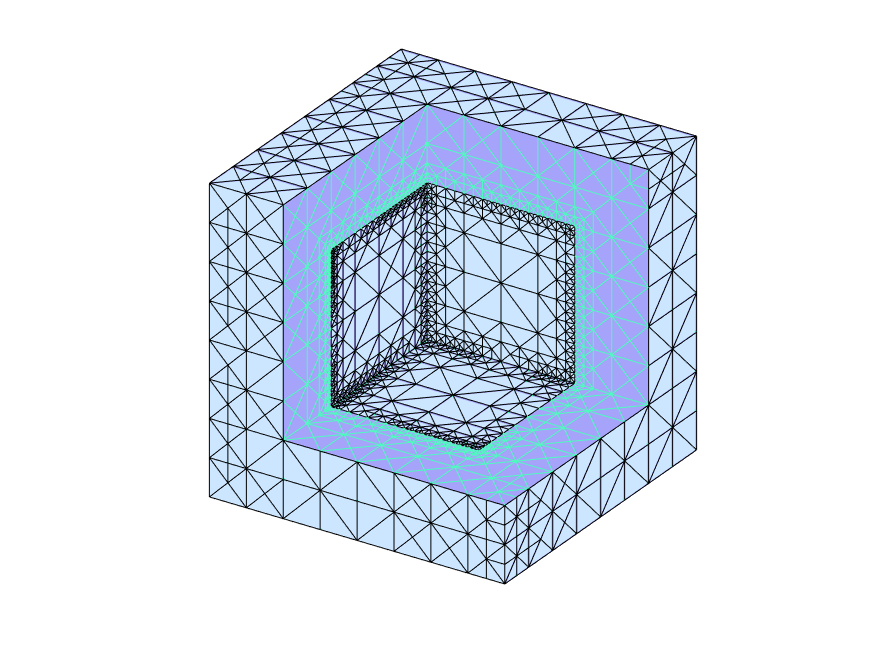}\includegraphics[width=6cm]{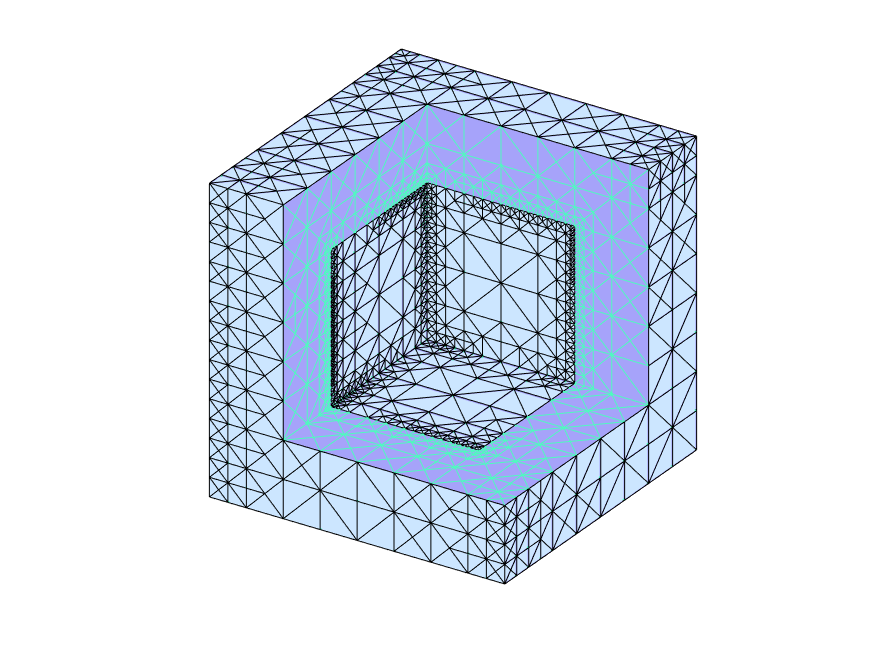}
    \caption{Adaptive meshes (approximately $2.6\times10^4$ tetrahedra) on the 3D cavity region generated by error estimators controlling the $H({\rm curl})$ norm (left) and $L^2$ norm (right)  errors.}
    \label{fig:Hcurl_3dgrids}
\end{figure}

The $H(\rm{curl})$ problem \eqref{eq:curlcurl} is discretized by the lowest order edge finite element on the L-shaped domain $\Omega=(-1,1)^2\backslash([0,1)\times[-1,0))$ as well as on the 3D cavity region $\Omega=(0,1)^3\backslash[0.25,0.75]^3$, see Figure \ref{fig:Hcurl_3dgrids}. We compare the smoother-type a posteriori error estimates \eqref{eq:additive_Hcurl} and \eqref{eq:multiplicative_Hcurl} with the residual-type error estimator (cf.~\cite{BHHW2000,Schoberl2008,LiZikatanov2025mcom})
\begin{align*}
\mathcal{E}_{{\rm res}}({\rm curl})&=\Big(\sum_{T\in\mathcal{T}_h}h_T^2\|{\rm div}(f-u_h)\|_{L^2(T)}^2+h_T^2\|f-{\rm curlcurl}u_h-u_h\|_{L^2(T)}^2\\
&\quad+\sum_{F\text{ is a  face in }\mathcal{T}_h}h_F\|\llbracket(f-u_h)\cdot n\rrbracket\|^2_{L^2(F)}+h_F\|\llbracket n\wedge{\rm curl}u_h\rrbracket\|^2_{L^2(F)}\Big)^\frac{1}{2}.
\end{align*}
Here $n\wedge{\rm curl}u_h|_F=t\cdot{\rm curl}u_h$ in $\mathbb{R}^2$ with $t$ being a unit tangent vector along $F$ and $n\wedge{\rm curl}u_h|_F=n\times{\rm curl}u_h$ in $\mathbb{R}^3$ with $n$ being the unit normal to $F$.
On the L-shaped domain $\Omega$, the exact solution is set to be $u=\nabla(r^{2/3}\sin(2\theta/3))$, where $r^{2/3}\sin(2\theta/3)$ is the singular solution used in Section \ref{subsect:Hgrad}. On the 3D test region, we set $f=(\sqrt{x^2 - 2y^2 + 3z^2 + 3}\exp(-xyz),0,0)$ in \eqref{eq:curlcurl} and use $\|u_h-u_{h/2}\|_{H(\rm curl,\Omega)}$ as the reference FEM error as the exact solution $u$ is unknown. It is shown in Figure \ref{fig:curlcurl} that all estimators lead to optimally convergent FEM solutions, while smoother-type a posteriori error estimates are much closer to the exact error than the residual error estimators. 

We are also interested in a posteriori error control of the $L^2$ norm error $\|u-u_h\|_{L^2(\Omega)}$ of \eqref{eq:curlcurlFEM} based on N\'ed\'elec's edge finite elements of the second kind. In this scenario, $\|u-u_h\|_{L^2(\Omega)}$ converges at a rate one order higher than $\|{\rm curl}(u-u_h)\|_{L^2(\Omega)}$. Currently, we are not aware of any residual-type a posteriori error estimate for $\|u-u_h\|_{L^2(\Omega)}$.  Motivated by the $L^2$ norm error estimators in Section \ref{subsect:Hgrad}, we employ $\|S_a({\rm curl})r\|_{L^2(\Omega)}$ to guide the local mesh refinement of the lowest order N\'ed\'elec's edge element of the second kind. As shown in Figure~\ref{fig:curlcurlL2}, the AFEM driven by $\|S_a({\rm curl})r\|_{L^2(\Omega)}$ recovers optimal convergence rate in the $L^2$ norm. 

\section{Block Diagonal Estimators for Symmetric Systems}\label{sect:saddle} With the help of block diagonal preconditioning (cf.~\cite{MardalWinther2011}), it is not difficult to generalize the smoother-type a posteriori error estimates in Section \ref{sect:discreteV} to saddle point problems. Let $\Sigma$ and $U$ be Hilbert spaces. The model saddle point problem finds $\sigma\in\Sigma$ and $u\in U$ such that  
\begin{subequations}\label{eq:model_saddle}
    \begin{align}
    a(\sigma,\tau)+b(\tau,u)&=g(\tau),\quad \tau\in \Sigma,\\
    b(\sigma,v)&=f(v),\quad v\in U.
\end{align}
\end{subequations}
Here $a: \Sigma\times\Sigma\rightarrow\mathbb{R}$, $b: \Sigma\times U\rightarrow\mathbb{R}$ are bounded bilinear forms, $a$ is symmetric and $f\in U^*$, $g\in \Sigma^*$. Let $\Sigma_h\times U_h\subset\Sigma\times U$ be an inf-sup stable finite element space pair equipped with the inner products $\langle A_{\Sigma_h}\bullet,\bullet\rangle$ and $\langle A_{U_h}\bullet,\bullet\rangle$, respectively, where $A_{\Sigma_h}: \Sigma_h\rightarrow\Sigma_h^*$ and $A_{U_h}: U_h\rightarrow U_h^*$ are SPD. A mixed FEM for \eqref{eq:model_saddle} seeks $\sigma_h\in\Sigma_h$ and $u_h\in U_h$ satisfying 
\begin{subequations}\label{eq:model_MFEM}
    \begin{align}
    a(\sigma_h,\tau)+b(\tau,u_h)&=g(\tau),\quad \tau\in \Sigma_h,\\
    b(\sigma_h,v)&=f(v),\quad v\in U_h.
\end{align}
\end{subequations}
The saturation assumption \eqref{eq:saturation} for \eqref{eq:model_MFEM} reads
\begin{equation}
    \|(\sigma-\sigma_{h/2},u-u_{h/2})\|_{\Sigma\times U}\leq\gamma_{h/2}\|(\sigma-\sigma_h,u-u_h)\|_{\Sigma\times U}
\end{equation}
for some $\gamma_{h/2}<1$.
We consider the augmented bilinear form $\mathcal{B}(\sigma,u;\tau,v):=a(\sigma,\tau)+b(\tau,u)+b(\sigma,v)$ and fix the boundedness and inf-sup constants $\alpha_h>0$, $\beta_h>0$  satisfying
\begin{align*}
    \mathcal{B}(\tau_1,v_1;\tau_2,v_2)&\leq\alpha_h\|(\tau_1,v_1)\|_{\Sigma\times U}\|(\tau_2,v_2)\|_{\Sigma\times U},\\
    \sup_{\|(\tau_2,v_2)\|_{\Sigma\times U}=1}\mathcal{B}(\tau_1,v_1;\tau_2,v_2)&\geq \beta_h\|(\tau_1,v_1)\|_{\Sigma\times U}
\end{align*}
for all $(\tau_1,v_1), (\tau_2,v_2)\in\Sigma\times U$. Let the residuals $r_\sigma\in\Sigma_{h/2}^*$, $r_u\in U_{h/2}^*$ be given by \begin{align*}
\langle r_\sigma,\tau\rangle&:=g(\tau)-a(\sigma_h,\tau)-b(\tau,u_h),\quad\tau\in\Sigma_{h/2},\\      
\langle r_u,v\rangle&:=f(v)-b(\sigma_h,v),\quad v\in U_{h/2}.
\end{align*}
In the next theorem, we propose a framework for constructing smoother-type a posteriori error estimates of \eqref{eq:model_MFEM}.
\begin{theorem}\label{thm:rSr_saddle}
Assume that $\Sigma_h\subset\Sigma_{h/2}$ and $U_h\subset U_{h/2}$ with $I_h: \Sigma_h\rightarrow\Sigma_{h/2}$ and $J_h: U_h\rightarrow U_{h/2}$ being inclusions. Let $B_{\Sigma_{h/2}}=I_h A_{\Sigma_{h/2}}^{-1}I_h^*+S_{\Sigma_{h/2}}: \Sigma_{h/2}^*\rightarrow \Sigma_{h/2}$ and $B_{U_{h/2}}=J_h A_{U_{h/2}} ^{-1}J_h^*+S_{U_{h/2}}: U_{h/2}^*\rightarrow U_{h/2}$.
Assume that $B_{\Sigma_{h/2}}$, $B_{U_{h/2}}$ are preconditioners for $A_{\Sigma_{h/2}}$, $A_{U_{h/2}}$, respectively, i.e.,
\begin{align*}
&c_2\langle r_1,B_{\Sigma_{h/2}} r_1\rangle\leq\langle r_1,A_{\Sigma_{h/2}}^{-1}r_1\rangle\leq c_3\langle r_1,B_{\Sigma_{h/2}} r_1\rangle,\quad\forall r_1\in \Sigma_{h/2}^*,\\
&c_4\langle r_2,B_{U_{h/2}}r_2\rangle\leq\langle r_2,A_{U_{h/2}}^{-1}r_2\rangle\leq c_5\langle r_2,B_{U_{h/2}}r_2\rangle,\quad\forall r_2\in U_{h/2}^*,
\end{align*}
where $c_2, ..., c_5>0$ are uniform constants. Then for \eqref{eq:model_MFEM} it holds that 
\begin{equation*}
\frac{\sqrt{ \min(c_2,c_4) }}{\alpha_{h/2}(1+\gamma_{h/2})}\mathcal{E}_h(\sigma_h,u_h)\leq\|(\sigma-\sigma_h,u-u_h)\|_{\Sigma\times U}\leq\frac{\sqrt{ \max(c_3,c_5) }}{\beta_{h/2}(1-\gamma_{h/2})}\mathcal{E}_h(\sigma_h,u_h).
\end{equation*}   
where the error estimator is
\begin{align*}
\mathcal{E}_h(\sigma_h,u_h)&=\sqrt{\langle r_\sigma,B_{\Sigma_{h/2}}r_\sigma\rangle+\langle r_u,B_{U_{h/2}}r_u\rangle}\\
&=\sqrt{\langle r_\sigma,S_{\Sigma_{h/2}} r_\sigma\rangle+\langle r_u,S_{U_{h/2}}r_u\rangle}.
\end{align*}   
\end{theorem}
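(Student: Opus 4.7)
The plan is to follow the template already established by Theorem \ref{thm:rSr} and the scalar saturation argument in \eqref{eq:error_uhuh2}, but carried out on a product space with the inf-sup/boundedness structure of $\mathcal{B}$ replacing SPD energy identities. The proof will proceed in four steps: (i) use the saturation assumption to pass from the continuous error to the fine-grid discretization error $\|(\sigma_{h/2}-\sigma_h,u_{h/2}-u_h)\|_{\Sigma\times U}$; (ii) use the fine-grid inf-sup and boundedness of $\mathcal{B}$ to convert this discrete error into the dual norm of $(r_\sigma,r_u)$; (iii) identify that dual norm with $\sqrt{\langle r_\sigma,A_{\Sigma_{h/2}}^{-1}r_\sigma\rangle+\langle r_u,A_{U_{h/2}}^{-1}r_u\rangle}$ via Riesz representation; and (iv) use Galerkin orthogonality plus the two preconditioning equivalences to replace $A^{-1}$ by $B$ and then by $S$.

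First I would subtract \eqref{eq:model_MFEM} at level $h/2$ from the one at level $h$ to get the identity
\[
\mathcal{B}(\sigma_{h/2}-\sigma_h,u_{h/2}-u_h;\tau,v)=\langle r_\sigma,\tau\rangle+\langle r_u,v\rangle,\qquad (\tau,v)\in\Sigma_{h/2}\times U_{h/2}.
\]
The boundedness constant $\alpha_{h/2}$ and inf-sup constant $\beta_{h/2}$ then yield
\[
\tfrac{1}{\alpha_{h/2}}\|(r_\sigma,r_u)\|_*\le\|(\sigma_{h/2}-\sigma_h,u_{h/2}-u_h)\|_{\Sigma\times U}\le\tfrac{1}{\beta_{h/2}}\|(r_\sigma,r_u)\|_*,
\]
where $\|(r_\sigma,r_u)\|_*$ is the dual norm over $\Sigma_{h/2}\times U_{h/2}$ equipped with the product norm induced by $A_{\Sigma_{h/2}}$ and $A_{U_{h/2}}$. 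A short triangle-inequality argument identical to \eqref{eq:error_uhuh2}, applied to the product norm, turns the saturation hypothesis into $\|(\sigma-\sigma_h,u-u_h)\|_{\Sigma\times U}\in\bigl[(1+\gamma_{h/2})^{-1},(1-\gamma_{h/2})^{-1}\bigr]\cdot\|(\sigma_{h/2}-\sigma_h,u_{h/2}-u_h)\|_{\Sigma\times U}$.

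Next I would compute the dual norm explicitly. Since the product norm splits, the supremum over $\|(\tau,v)\|_{\Sigma\times U}=1$ of $\langle r_\sigma,\tau\rangle+\langle r_u,v\rangle$ is attained by the Riesz representatives, giving
\[
\|(r_\sigma,r_u)\|_*^2=\langle r_\sigma,A_{\Sigma_{h/2}}^{-1}r_\sigma\rangle+\langle r_u,A_{U_{h/2}}^{-1}r_u\rangle.
\]
Now comes the smoother-reduction step, which is the core new ingredient: from the definition of $r_\sigma$, $r_u$ and the mixed FEM equations at level $h$, we have $\langle r_\sigma,\tau_h\rangle=0$ and $\langle r_u,v_h\rangle=0$ for all $(\tau_h,v_h)\in\Sigma_h\times U_h$; hence $I_h^*r_\sigma=0$, $J_h^*r_u=0$, so the coarse-grid components of $B_{\Sigma_{h/2}}r_\sigma$ and $B_{U_{h/2}}r_u$ vanish and $B_{\Sigma_{h/2}}r_\sigma=S_{\Sigma_{h/2}}r_\sigma$, $B_{U_{h/2}}r_u=S_{U_{h/2}}r_u$. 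This justifies the two equivalent forms of $\mathcal{E}_h$.

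Finally I would combine the two scalar preconditioning equivalences with the obvious inequalities $\min(c_2,c_4)(x+y)\le c_2x+c_4y$ and $c_3x+c_5y\le\max(c_3,c_5)(x+y)$ (applied with $x=\langle r_\sigma,B_{\Sigma_{h/2}}r_\sigma\rangle$, $y=\langle r_u,B_{U_{h/2}}r_u\rangle$) to obtain $\|(r_\sigma,r_u)\|_*\eqsim\mathcal{E}_h(\sigma_h,u_h)$, and then chain together (i), (ii), (iv) to arrive at the stated two-sided bound. I do not expect any single step to be hard --- the saturation and preconditioning pieces are direct, and the Galerkin-orthogonality reduction $B r=S r$ is the same trick as in Theorem~\ref{thm:rSr}. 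The only point that requires a little care is maintaining the product-norm structure consistently so that the inf-sup/boundedness constants $\alpha_{h/2},\beta_{h/2}$ match exactly the dual pairing used to express $\mathcal{E}_h$; an inconsistent choice of inner product on $\Sigma_{h/2}\times U_{h/2}$ would introduce spurious extra factors.
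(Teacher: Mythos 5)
Your proposal is correct and follows essentially the same route as the paper's proof: saturation to pass to the fine-grid error, the discrete inf-sup/boundedness of $\mathcal{B}$ on $\Sigma_{h/2}\times U_{h/2}$ to equate that error with the dual norm $\sqrt{\langle r_\sigma,A_{\Sigma_{h/2}}^{-1}r_\sigma\rangle+\langle r_u,A_{U_{h/2}}^{-1}r_u\rangle}$, Galerkin orthogonality ($I_h^*r_\sigma=0$, $J_h^*r_u=0$) to reduce $B$ to $S$, and the $\min/\max$ combination of the two preconditioning equivalences. Your explicit treatment of the lower bound via the boundedness constant $\alpha_{h/2}$ simply fills in what the paper dismisses as ``proved in a similar way,'' so there is no substantive difference.
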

\begin{proof}
Using $\Sigma_h\subset\Sigma_{h/2}$, $U_h\subset U_{h/2}$, we have  $I_h^*r_\sigma=0$, $J_h^*r_u=0$ and thus $\langle r_\sigma,B_{\Sigma_{h/2}}r_\sigma\rangle=\langle r_\sigma,S_{\Sigma_{h/2}}r_\sigma\rangle$ and $\langle r_u,B_{U_{h/2}}r_u\rangle=\langle r_u,S_{U_{h/2}}r_u\rangle$. By definition of the inf-sup condition and spectral equivalence assumption,  we have
\begin{align*}
&\|(\sigma_{h/2}-\sigma_h,u_{h/2}-u_h)\|_{\Sigma\times U}\leq{\beta_{h/2}^{-1}}\|(r_\sigma,r_u)\|_{\Sigma^*\times U^*}\\
&=\beta_{h/2}^{-1}\sqrt{\langle r_\sigma,A_{\Sigma_{h/2}}^{-1}r_\sigma\rangle+\langle r_u,A^{-1}_{U_{h/2}}r_u\rangle}\\
&\leq{\beta_{h/2}^{-1}}\sqrt{c_3\langle r_\sigma,B_{\Sigma_{h/2}} r_\sigma\rangle+c_5\langle r_u,B_{U_{h/2}}r_u\rangle},
\end{align*}  
which implies that
\begin{equation}\label{eq:MFEM_uhuh2}
\|(\sigma_{h/2}-\sigma_h,u_{h/2}-u_h)\|_{\Sigma\times U}\leq \frac{\sqrt{\max(c_3,c_5) }}{\beta_{h/2}}\sqrt{\langle r_\sigma,S_{\Sigma_{h/2}} r_\sigma\rangle+\langle r_u,S_{U_{h/2}}r_u\rangle}.
\end{equation} 
Combining \eqref{eq:MFEM_uhuh2} with the saturation assumption, we obtain the upper bound
\begin{equation*}
\|(\sigma-\sigma_h,u-u_h)\|_{\Sigma\times U}\leq\frac{\sqrt{\max(c_3,c_5) }}{\beta_{h/2}(1-\gamma_{h/2})}\sqrt{\langle r_\sigma,S_{\Sigma_{h/2}} r_\sigma\rangle+\langle r_u,S_{U_{h/2}}r_u\rangle}.
\end{equation*}
The lower bound is proved in a similar way.
\end{proof}

\subsection{Poisson in Mixed Form} 
Let  $\Sigma_h=V_h({\rm div})$ be the lowest order Raviart-Thomas finite element space equipped with the norm $\|\bullet\|_{H(\rm div,\Omega)}=\big(\|\bullet\|_{L^2(\Omega)}^2+\|{\rm div}\bullet\|_{L^2(\Omega)}^2\big)^\frac{1}{2}$ and inner product $\langle A_h({\rm div})\bullet,\bullet\rangle$. Let $U_h$ be the space of piecewise constants on $\mathcal{T}_h$ equipped with the $L^2(\Omega)$ norm. 
{For the boundary value problem $-\Delta u=f$ in $\Omega$ with $u|_{\partial\Omega}=g$,} the mixed FEM seeks $(\sigma_h,u_h)\in V_h({\rm div})\times U_h$ such that 
\begin{equation}\label{eq:Poisson_MFEM}
    \begin{aligned}
       (\sigma_h,\tau_h)+({\rm div}\tau_h,u_h)&={( g, \tau_h\cdot n)_{\partial\Omega}},\quad\tau_h\in V_h({\rm div}),\\
        ({\rm div}\sigma_h,v_h)&=-(f,v_h),\quad v_h\in U_h.
    \end{aligned}
\end{equation}
Here $\sigma_h$ is an approximation to the gradient field $\sigma=\nabla u$. Due to the discontinuous nature of $U_h$, 
the inner product operator $A_{U_{h/2}}$ of $U_{h/2}$ corresponds to a diagonal matrix, which is easily invertible. Similarly to $S_a(\rm curl)$ in Section \ref{subsect:Hcurl}, the smoother for $A_{\Sigma_{h/2}}$ is of the block-type  based on the vertex-oriented subspace  
\[
V_{h/2}({\rm div})=\sum_{k=1}^{N_{h/2}}\big(V_k({\rm div})=\{v\in V_{h/2}({\rm div}): {\rm supp}(v)\subset\Omega_{h/2,k}\}\big).
\]
Let $S_a({\rm div}): V_{h/2}({\rm div})^*\rightarrow V_{h/2}({\rm div})$ be the additive block smoother on $\mathcal{T}_{h/2}$, i.e., $S_a({\rm div})r_\sigma=\sum_{k=1}^{N_{h/2}}\eta_k$ with  each $\eta_k\in V_k({\rm div})$ solving
\begin{align*}
&({\rm div}\eta_k,{\rm div}\tau)_{\Omega_{h/2,k}}+(\eta_k,\tau)_{\Omega_{h/2,k}}=\langle r_\sigma,\tau\rangle\\
&={(g, \tau\cdot n)_{\partial\Omega\cap\partial\Omega_{h/2,k}}}-(\sigma_h,\tau)_{\Omega_{h/2,k}}-({\rm div}\tau,u_h)_{\Omega_{h/2,k}}
,\quad \tau\in V_k({\rm div}).    
\end{align*}
In \cite{ArnoldFalkWinther2000}, it has been shown that $V_{h/2}({\rm div})=V_h({\rm div})+\sum_{k=1}^{N_{h/2}}V_k({\rm div})$ is a stable subspace decomposition and $$B_a({\rm div})=I_hA_h({\rm div})^{-1}I_h^*+S_a({\rm div})$$ is a uniform preconditioner for the discrete $H({\rm div})$ operator $A_{h/2}({\rm div})$. Therefore, for the mixed FEM \eqref{eq:Poisson_MFEM}, 
we have verified the assumptions in Theorem \ref{thm:rSr_saddle} and obtain
\begin{equation}\label{eq:MFEM_smoother}
\begin{aligned}
\|\sigma-\sigma_h\|^2_{H({\rm div},\Omega)}+\|u-u_h\|^2_{L^2(\Omega)}&\eqsim\langle r_\sigma,S_a({\rm div}) r_\sigma\rangle+\langle r_u,A_{U_{h/2}}^{-1}r_u\rangle\\
&=\sum_{k=1}^{N_{h/2}}\|\eta_k\|_{H({\rm div},\Omega_{h/2,k})}^2+\langle r_u,A_{U_{h/2}}^{-1}r_u\rangle.
\end{aligned}
\end{equation}

\begin{figure}[th]
\centering
\includegraphics[width=6cm]{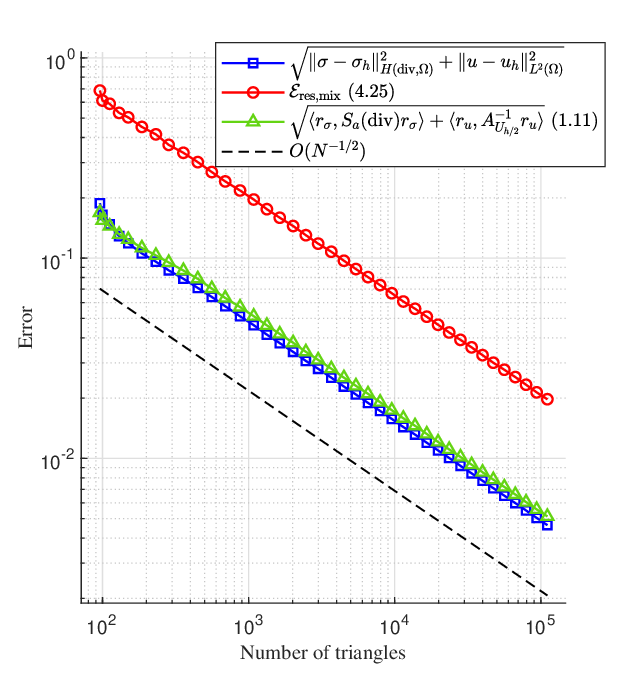} 
\caption{Convergence of a posteriori error estimates for the mixed FEM \eqref{eq:Poisson_MFEM}.}
\label{fig:Poisson_MFEM}
\end{figure}

For the mixed FEM \eqref{eq:Poisson_MFEM}, we compare the smoother-type a posteriori error estimate \eqref{eq:MFEM_smoother} to the residual error estimator (cf.~\cite{BV1996,Carstensen1997,Li2021MCOM,Li2021M2AN})
\begin{align*}
&\mathcal{E}_{{\rm res},{\rm mix}}=\Big(\sum_{T\in\mathcal{T}_h}h_T^2\|\sigma_h-\nabla u_h\|_{L^2(T)}^2+h_T^2\|{\rm curl}\sigma_h\|_{L^2(T)}^2+\|f+{\rm div}\sigma_h\|_{L^2(T)}^2\\
&+\sum_{\text{edge }E\subset\mathring{\Omega}}h_E\|\llbracket\sigma_h\cdot t\rrbracket\|^2_{L^2(E)}+h_E\|\llbracket u_h\rrbracket\|^2_{L^2(E)}{+\sum_{\text{edge }E\subset\partial\Omega}h_E\|\sigma_h\cdot t-\partial g/\partial t\|^2_{L^2(E)}+h_E\|u_h-g\|^2_{L^2(E)}\Big)^\frac{1}{2}}.
\end{align*}
{In the experiment, we use the same exact singular solution $u$ in Section \ref{subsect:NE_Hgrad}.}
Figure \ref{fig:Poisson_MFEM} demonstrates that the smoother-type estimator \eqref{eq:MFEM_smoother} is more accurate than $\mathcal{E}_{\rm res,mix}$ and yields an optimally convergent AFEM.

\subsection{Biot's Consolidation Model}\label{subsect:Biot}
Complex multi-physics problems are often modeled by saddle-point systems involving multiple physical parameters. Motivated by existing parameter-robust solvers in the literature, it is relatively easy to derive parameter-robust smoother-type a posteriori error estimates for multi-physics systems. For instance, we consider the steady-state Biot's consolidation model
\begin{subequations}\label{eq:Biot}
    \begin{align}
    -{\rm div}(\varepsilon(u)+\lambda({\rm div}u) I-\alpha p_F I)&=f\quad\text{ in }\Omega,\\
    -\alpha^2\lambda^{-1}p_F-\alpha{\rm div}u+{\rm div}(\kappa\nabla p_F)&=g\quad\text{ in }\Omega,
\end{align}
\end{subequations}
under boundary condition $u|_{\partial\Omega}=0$, $p_F|_{\partial\Omega}=0$, 
where $\varepsilon(u)=(\nabla u+(\nabla u)^T)/2$ is the symmetric gradient of $u$, and $\lambda>0$, $\alpha>0$, $\kappa>0$ are rescaled Lam\'e, Biot--Willis and permeability constants, respectively. The system \eqref{eq:Biot} models the
interactions between the deformation and fluid flow in a
fluid-saturated elastic porous medium, where $u$ is the displacement of the elastic medium and $p_F$ the fluid pressure. 

For simplicity, we derive smoother-type a posteriori error estimates only for the total pressure formulation \cite{LeeMardalWinther2017,BoonKuchtaMardalRuizBaier2021,Carcamo2024} among many classical discretizations of Biot's equation, see \cite{PhillipsWheeler2007,HongKraus2018,RodrigoHu2018,PGHARZ2025} and references therein. Let $V_h\subset [H_0^1(\Omega)]^d$, $Q_{T,h}\subset L^2(\Omega)$, $Q_{F,h}\subset H_0^1(\Omega)$ be finite element spaces and $V_h\times Q_{T,h}$ be a Stokes stable finite element pair.  For \eqref{eq:Biot}, the mixed FEM utilizing total pressure seeks $u_h\in V_h$, $p_{T,h}\in Q_{T,h}$, $p_{F,h}\in Q_{F,h}$ such that
\begin{subequations}\label{eq:total_pressure}
    \begin{align}
    (\varepsilon(u_h),\varepsilon (v_h))-({\rm div}v_h,p_{T,h})&=(f,v_h),\label{eq:total_pressure1}\\
    -({\rm div}u_h,q_{T,h})-\lambda^{-1}(p_{T,h},q_{T,h})+\alpha\lambda^{-1}(p_{F,h},q_{T,h})&=0,\label{eq:total_pressure2}\\
   \alpha\lambda^{-1}(p_{T,h},q_{F,h})-2\alpha^2\lambda^{-1}(p_{F,h},q_{F,h})-\kappa(\nabla p_{F,h},\nabla q_{F,h})&=(g,q_{F,h})\label{eq:total_pressure3}
\end{align}
\end{subequations}
for all $v_h\in V_h$, $q_{T,h}\in Q_{T,h}$, $q_{F,h}\in Q_{F,h}$. 
Here $p_{T,h}$ is the finite element approximation to the artificial total pressure $p_T:=\alpha p_F-\lambda{\rm div}u$. 
Although a posteriori error estimates for Biot's model have been developed in, e.g.,
\cite{ErnMeunier2009,AhmedRaduNordbotten2019,KhanSilvester2021,WheelerGiraultLi2022,LiZikatanov2022IMA,FumagalliParoliniVerani2025}, a posteriori error analysis for the total pressure formulation \eqref{eq:total_pressure} seems missing in the literature. 

Let $[H_0^1(\Omega)]^d$, $L_0^2(\Omega)$ and $H_0^1(\Omega)$ be equipped with the norms
\begin{align*}
\|v\|_V&=\langle A_Vv,v\rangle^\frac{1}{2}:=\|\varepsilon(v)\|_{L^2(\Omega)},\\\|q_T\|_{Q_T}&=\langle A_{Q_T}q_T,q_T\rangle^\frac{1}{2}:=\sqrt{\lambda^{-1}(q_T,q_T)+(\bar{q}_T,\bar{q}_T)},\\
\|q_F\|_{Q_F}&=\langle A_{Q_F}q_F,q_F\rangle^\frac{1}{2}:=\sqrt{\alpha^2\lambda^{-1}(q_F,q_F)+\kappa(\nabla q_F,\nabla q_F)},
\end{align*}
where $\bar{q}_T:=(q_T,1)/|\Omega|.$ These operators have obvious analogues $A_{V_{h/2}}$, $A_{Q_{T,h/2}}$, $A_{Q_{F,h/2}}$  on the auxiliary refined mesh $\mathcal{T}_{h/2}$.
It has been proved in \cite{LeeMardalWinther2017} that the error 
    \[
    \!|\!|\!|(u,p_T,p_F)-(u_h,p_{T,h},p_{F,h})\,\!|\!|\!|:=\sqrt{\|u-u_h\|_V^2+\|p_T-p_{T,h}\|_{Q_T}^2+\|p_F-p_{F,h}\|_{Q_F}^2}
    \]
of \eqref{eq:total_pressure} is uniformly convergent for all parameters $\alpha, \kappa\leq1$, $\lambda\geq1$. Moreover, parameter-robust inf-sup stability and a block diagonal preconditioner $$\begin{pmatrix}A_{V_{h/2}}^{-1}&O&O\\O&A^{-1}_{Q_{T,h/2}}&O\\O&O&A^{-1}_{Q_{F,h/2}}\\\end{pmatrix}$$ for \eqref{eq:total_pressure} have been developed in \cite{LeeMardalWinther2017}. 

Let $S_{\widetilde{V}}: \widetilde{V}^*\rightarrow \widetilde{V}$ be the pointwise Jacobi smoother, namely, the operator corresponding to the inverse diagonal of the matrix for $A_{\widetilde{V}}$ with $\widetilde{V}=V_{h/2}$ or $Q_{F,h/2}$. Additive two-level preconditioners using the smoother $S_{\widetilde{V}}$ suffice to uniformly precondition the second order elliptic operator $A_{\widetilde{V}}$.
Although $A_{Q_{T,h/2}}$ corresponds to a mass matrix, 
the global average in $A_{Q_{T,h/2}}$ makes the matrix for $A_{Q_{T,h/2}}$ dense.
In Section 5 of \cite{LeeMardalWinther2017},  $A_{Q_{T,{h/2}}}$ is preconditioned by $S_{Q_{T,h/2}}: Q_{T,h/2}^*\rightarrow Q_{T,h/2}$ corresponding to a matrix of the form $(\mathbb{I}+\mathbb{V})\mathbb{D}(\mathbb{I}+\mathbb{V}^\top)$, where $\mathbb{I}$ is the identity, $\mathbb{D}$ is diagonal and $\mathbb{V}$ is of rank one.

Following \cite{LeeMardalWinther2017} and the results in Theorem \ref{thm:rSr_saddle}, we directly obtain the smoother-type error estimator 
\begin{equation}\label{eq:Biot_smoother}
\begin{aligned}
&\!|\!|\!|(u,p_T,p_F)-(u_h,p_{T,h},p_{F,h})\,\!|\!|\!|\eqsim\mathcal{E}_{\rm smoother}\\
&:=\sqrt{\langle r_V,S_{V_{h/2}} r_V\rangle+\langle r_T,S_{Q_{T,h/2}}r_T\rangle+\langle r_F,S_{Q_{F,h/2}}r_F\rangle}
\end{aligned}
\end{equation}  
for \eqref{eq:total_pressure}, where $r_V\in V_{h/2}^*$, $r_T\in Q_{T,h/2}^*$, $r_F\in Q_{F,h/2}^*$ are the finite element residuals corresponding to \eqref{eq:total_pressure1}, \eqref{eq:total_pressure2}, \eqref{eq:total_pressure3} on $\mathcal{T}_{h/2}$, respectively.

\begin{figure}[thp]
    \centering
    \includegraphics[width=4cm]{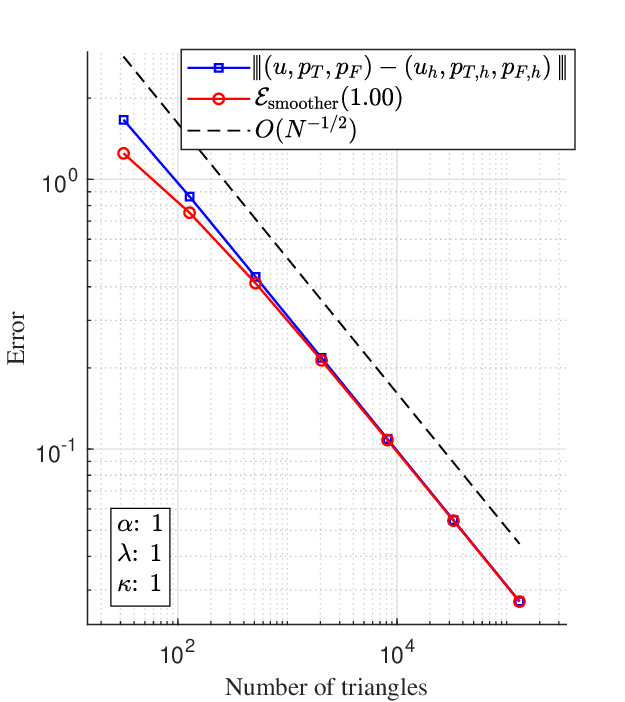}
     \includegraphics[width=4cm]{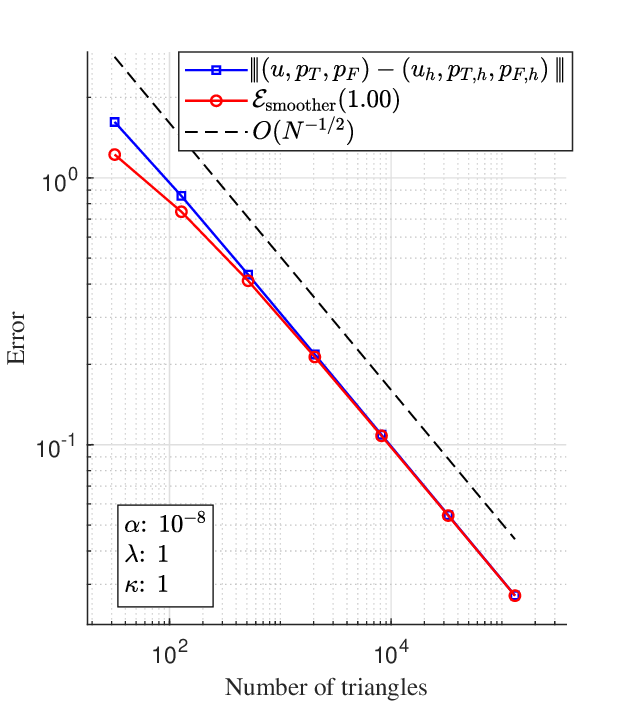}
     \includegraphics[width=4cm]{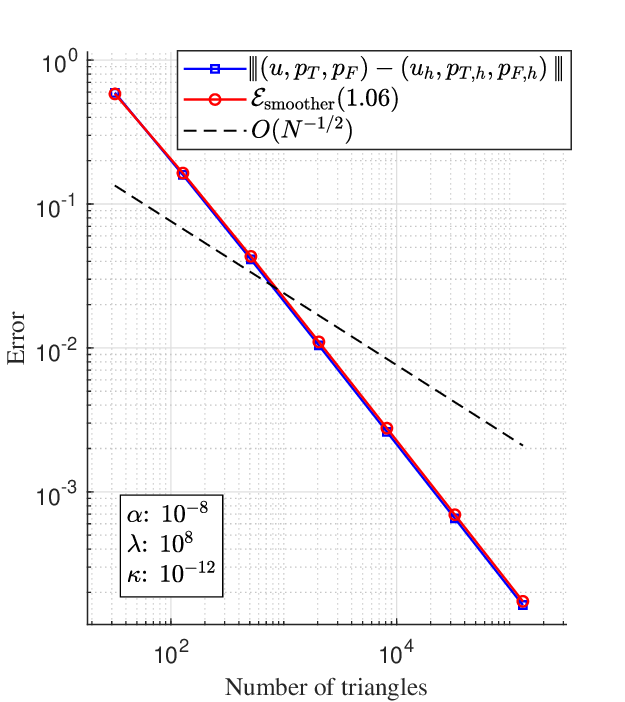}
\caption{Uniform convergence of error estimators for \eqref{eq:Biot} using a constructed solution with effectivity ratio estimator/error shown in the parenthesis.}\label{fig:Biot}
\end{figure}

\begin{figure}[th]
    \centering
\includegraphics[width=4cm]{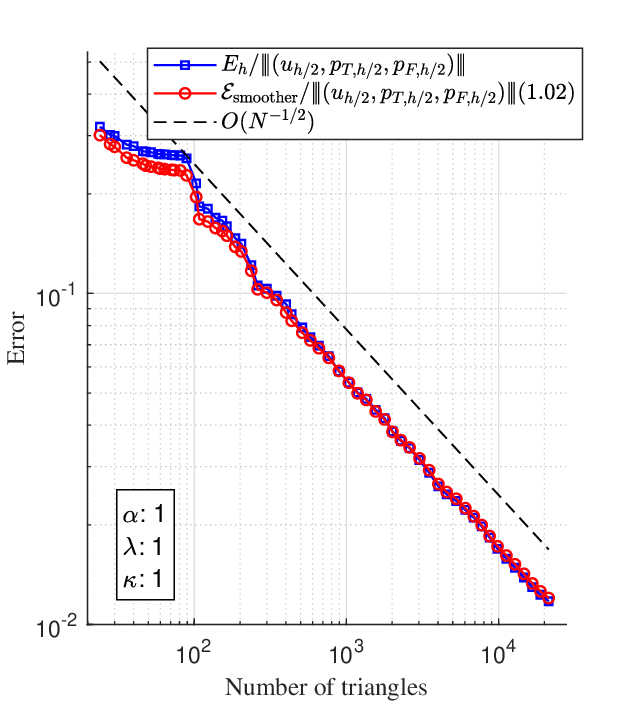}
     \includegraphics[width=4cm]{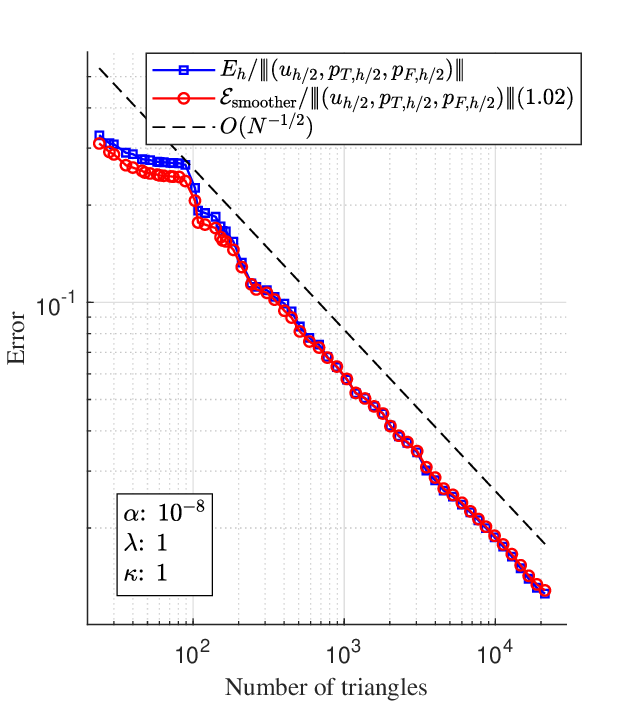}
     \includegraphics[width=4cm]{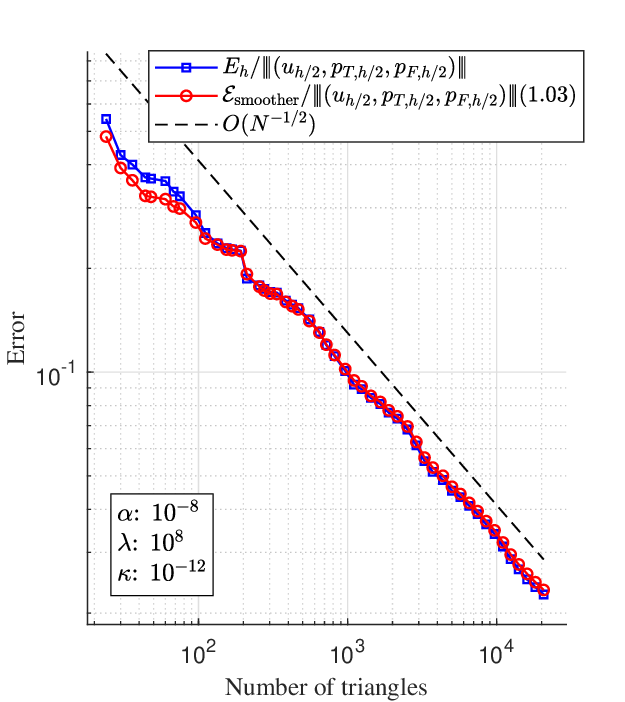}
\caption{Convergence of the AFEM driven by a smoother-type error estimator for \eqref{eq:Biot} on an L-shaped domain with $f=(1,1)$, $g=1$ and ratio estimator/error shown in the parenthesis.}\label{fig:Biot_AFEM}
\end{figure}

In the numerical experiment, we set $V_h$, $Q_{T,h}$, $Q_{F,h}$ as  $\mathcal{P}_2$, $\mathcal{P}_1$, $\mathcal{P}_1$ continuous finite element spaces, respectively. On the unit square $\Omega = (0,1)^2$, the Dirichlet boundary condition and the source terms  $f$ and $g$ are manufactured from the exact solution:
\begin{equation*}
    u = \big( \sin(\pi x) \cos(\pi y), -\cos(\pi x) \sin(\pi y) \big), \quad p_F = \sin(\pi x) \sin(\pi y).
\end{equation*}
We test three sets of parameters $(\alpha, \lambda, \kappa)=(1, 1, 1)$, $(10^{-8}, 1, 1)$, $(10^{-8}, 10^8, 10^{-12})$ under uniform mesh refinement. It is observed from  Figure~\ref{fig:Biot} that $\mathcal{E}_{\rm smoother}$ is quite close to $\!|\!|\!|(u,p_T,p_F)-(u_h,p_{T,h},p_{F,h})\,\!|\!|\!|$ and the effectivity ratio $\mathcal{E}_{\rm smoother}/\allowbreak \,\!|\!|\!|(u,p_T,p_F)-\allowbreak (u_h,p_{T,h},p_{F,h})\,\!|\!|\!|$ is robust even for extreme parameter values. 

Then we perform adaptive simulations on the L-shaped domain $\Omega = (-1,1)^2 \setminus \big( [0,1) \times [-1,0) \big)$, with source terms $f = (1,1)$ and $g=1$. Since the exact solution is unknown, we use $$E_h=\!|\!|\!|(u_h,p_{T,h},p_{F,h})-(u_{h/2},p_{T,h/2},p_{F,h/2})\,\!|\!|\!|$$ as the reference error. The corresponding numerical results are presented in Figure~\ref{fig:Biot_AFEM}.
The smoother-type error estimators are able to accurately control the FEM error under locally refined meshes for singular solutions.

\section{Error Estimators for Non-symmetric Problems}\label{sect:nonsymmetric}
A rigorous study of smoother-type a posteriori error estimates for non-symmetric problems is beyond the scope of the current paper.  Nevertheless, error estimators such as $\|S^a_{h/2}r\|$ are still applicable to arbitrary linear PDEs.

\subsection{Convection-diffusion equation}\label{subsect:convection_diffusion}
Let $\alpha>0$ be the diffusion coefficient and  $\beta: \Omega\rightarrow\mathbb{R}^d$ be a convective field. The convection-diffusion equation 
\begin{equation}\label{eq:convection_diffusion}
\begin{aligned}
-\nabla\cdot(\alpha\nabla u+\beta u)&=f\quad\text{ in } \Omega,\\
   u&=0\quad\text{ on }\partial\Omega
\end{aligned}
\end{equation}
is notoriously difficult to solve due to the sharp boundary layer near $\partial\Omega$ when $\alpha\ll\|\beta\|_{L^\infty(\Omega)}$. Let $V_h\subset H_0^1(\Omega)$ be the continuous and piecewise $\mathcal{P}_p$ finite element subspace. The FEM for \eqref{eq:convection_diffusion} seeks $u_h \in V_h$ such that
\begin{equation*}
    a(u_h, v_h)=\langle A_hu_h,v_h\rangle:=(\alpha \nabla u_h, \nabla v_h) + (\beta u_h,\nabla v_h) = (f, v_h), \quad v_h \in V_h.
\end{equation*}
For convection-dominated problems, it is natural to resolve the boundary layer using adaptive mesh refinements. 

Let $S^a_{\mathcal{P}_p} $ and $S^m_{\mathcal{P}_p}$ be the pointwise Jacobi and Gauss--Seidel smoothers for $A_{h/2}$, respectively. 
Following the ideas in Section \ref{subsect:Hgrad}, we set $r=f-A_{h/2}u_h\in V_{h/2}^*$ and use $|S^a_{\mathcal{P}_p}r|_{H^1(\Omega)}$ and $|S^m_{\mathcal{P}_p}r|_{H^1(\Omega)}$ as a posteriori error estimates of  AFEMs for \eqref{eq:convection_diffusion}. In the numerical example, $\Omega = (0,1)^2$,  $\beta = (1,2)$ and $f=1$. The  diffusion coefficient is a piecewise constant:
\begin{equation*}
    \alpha(x)= 
    \begin{cases}
        1,       & x_1 \leq 0.5, \\
        10^{-3}, & x_1 > 0.5.
    \end{cases}
\end{equation*}
The polynomial degree $p$ ranges from 1 to 7. The initial mesh size $h_0$ is set as $h_0 = 2^{-7}$ for $p \leq 2$ and $h_0 = 2^{-5}$ for $p\geq3$, as low order AFEMs starting from a very coarse initial grid fail to converge.

The experimental results in Figure \ref{fig:Convection_diffusion_Pk} demonstrate that the smoother-type estimators remain effective for the highly non-symmetric problem \eqref{eq:convection_diffusion}. As shown in Figure \ref{fig:Convection_diffusion_GS_P1P6_mesh}, the corresponding AFEM is able to capture both the interface and boundary singularity. It is also observed from Figure \ref{fig:Convection_diffusion_Pk} that the effectivity ratios $|S^a_{\mathcal{P}_p}r|_{H^1(\Omega)}/|u-u_h|_{H^1(\Omega)}$ and $|S^m_{\mathcal{P}_p}r|_{H^1(\Omega)}/|u-u_h|_{H^1(\Omega)}$ remain mild as the polynomial degree $p$ grows.

\begin{figure}[thp]
\centering
\includegraphics[width=6cm]{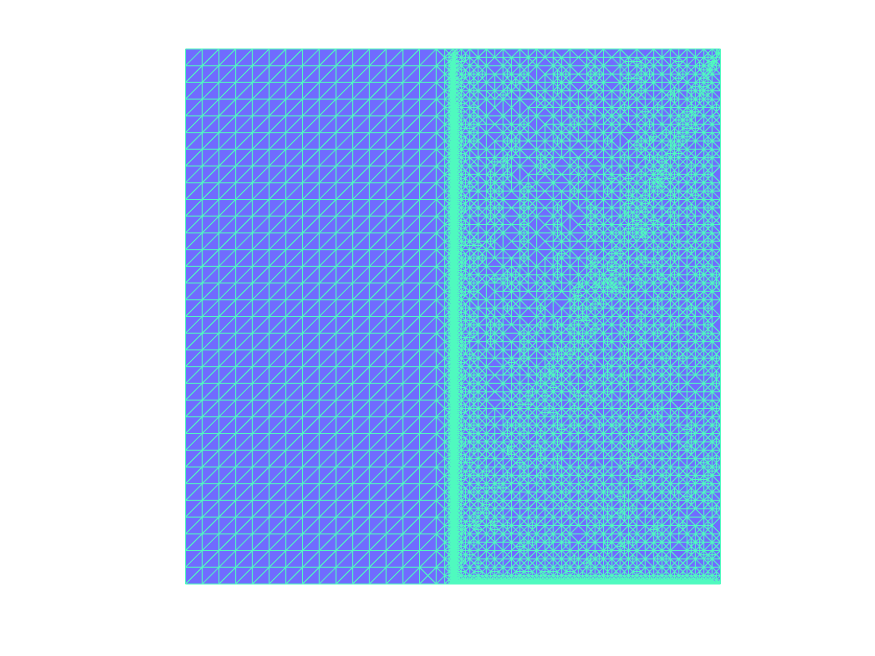}
\includegraphics[width=6cm]{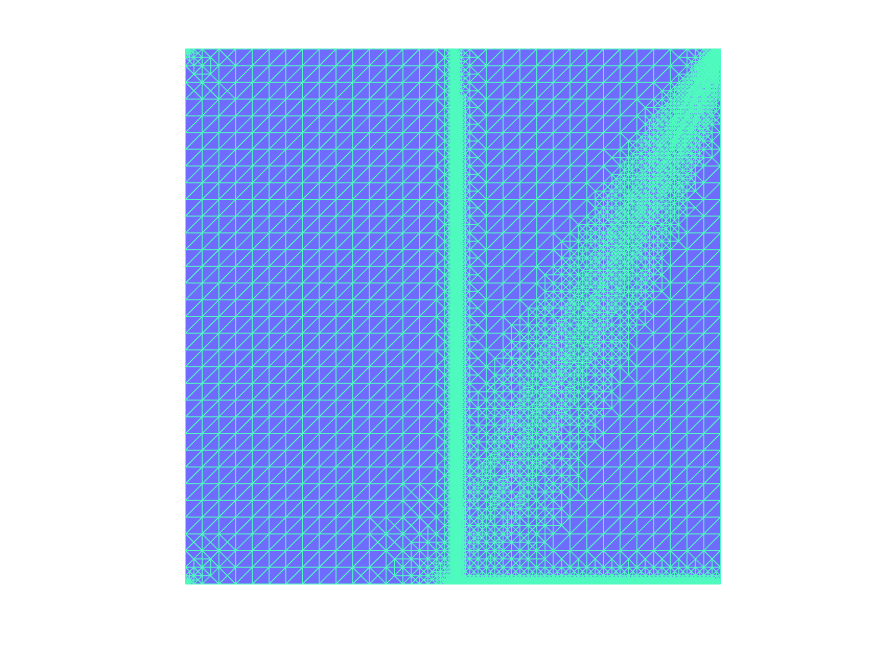} 
    \caption{Adaptive meshes with  approximately $2.2\times10^4$ triangles produced by $\mathcal{P}_1$- (left) and $\mathcal{P}_5$-AFEM (right) driven by the Gauss--Seidel smoother-type error estimator $|S^m_{\mathcal{P}_p}r|_{H^1(\Omega)}$.}
    \label{fig:Convection_diffusion_GS_P1P6_mesh}
\end{figure}

\begin{figure}[h]
        \centering
        \includegraphics[width=5.5cm]{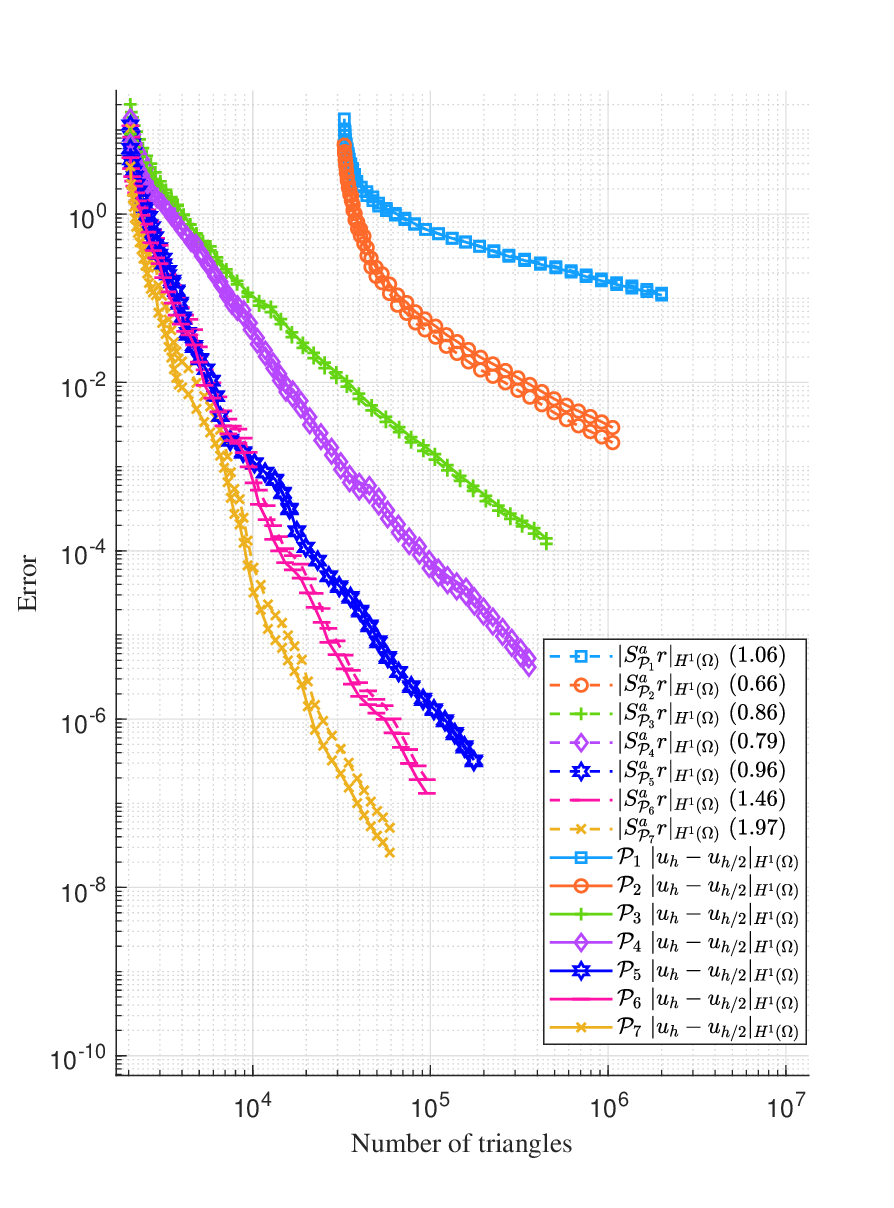} 
        \includegraphics[width=5.5cm]{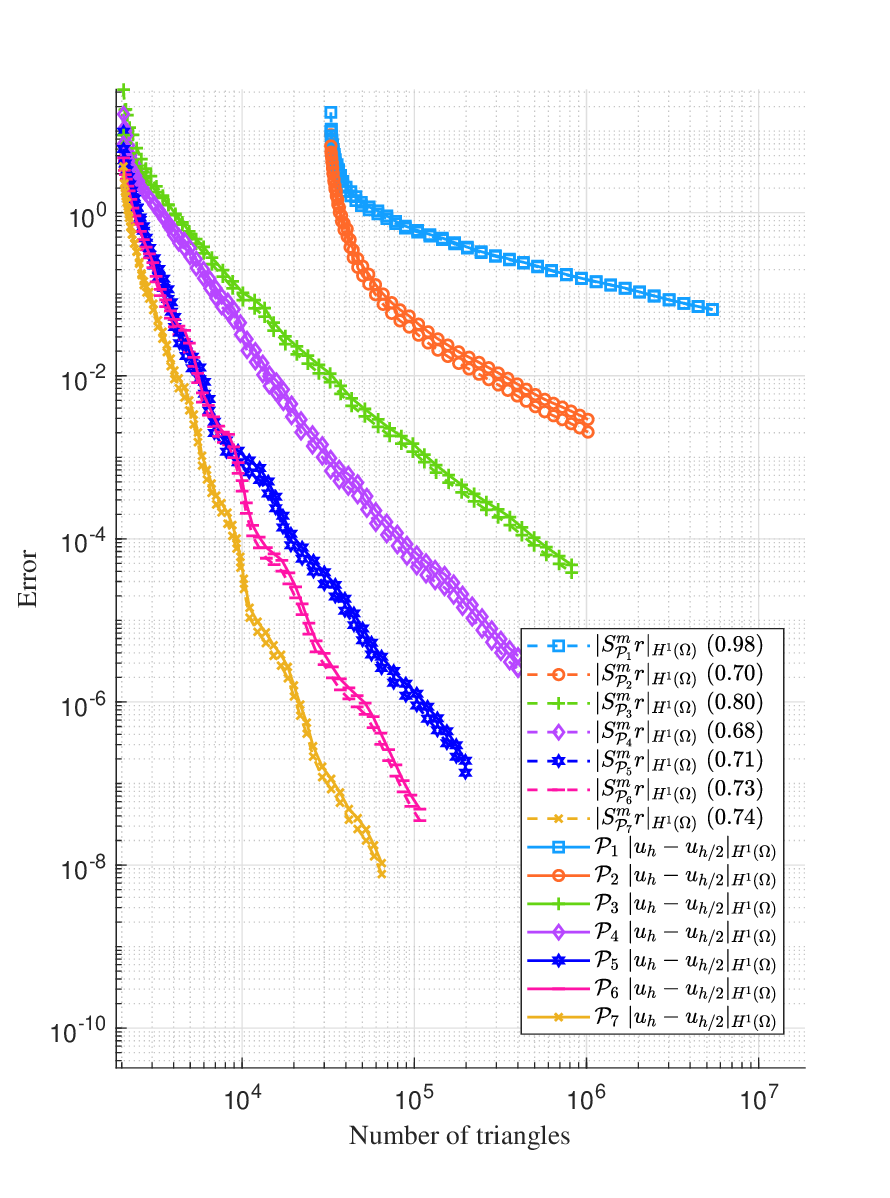}
    \caption{Convergence of $\mathcal{P}_p$-AFEM for the convection-diffusion problem driven by Jacobi (Left) and Gauss--Seidel (Right) smoother estimators with ratio estimator/error shown in the parenthesis.}
    \label{fig:Convection_diffusion_Pk}
\end{figure}

\begin{figure}[thp]
    \centering
    \includegraphics[width=10cm]{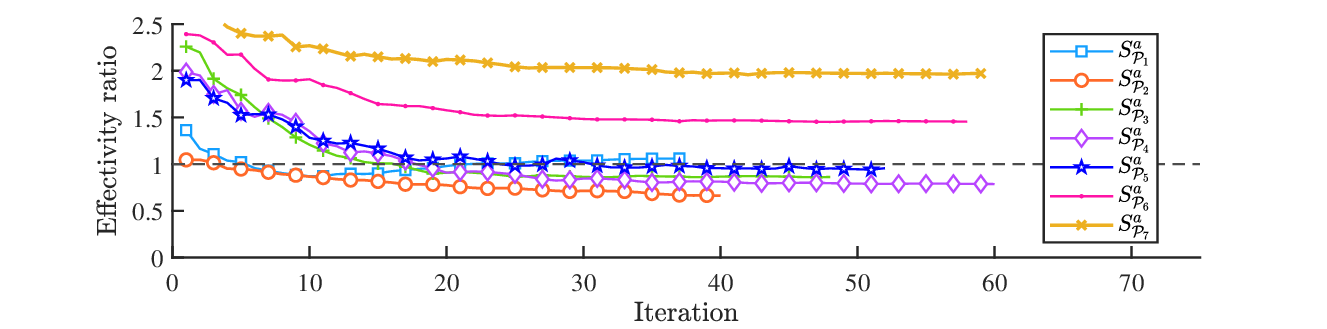} 
    \includegraphics[width=10cm]{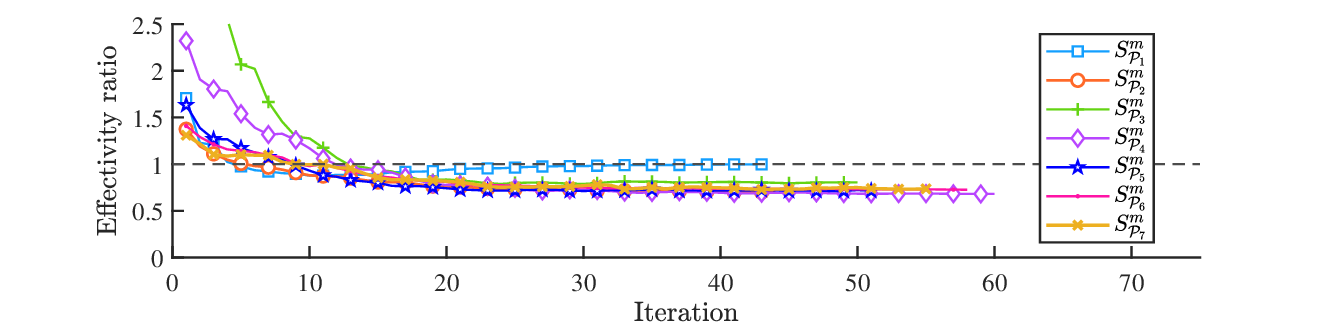}
    \caption{Effectivity ratios $|S^a_{\mathcal{P}_p} r|_{H^1(\Omega)} / | u_h-u_{h/2} |_{H^1(\Omega)}$ (top) and $|S^m_{\mathcal{P}_p} r|_{H^1(\Omega)} / | u_h-u_{h/2} |_{H^1(\Omega)}$ (bottom) for the convection-diffusion equation.}
    \label{fig:Convection_diffusion_ratio}
\end{figure}

\subsection{Helmholtz Equation} Another difficult non-symmetric problem is the numerical simulation of acoustic wave scattering in the high frequency regime. Let $\Omega_0\subset\Omega_1$ be a scattering body and $\Omega=\Omega_1\backslash\overline{\Omega}_0$ be the computational domain. The acoustic scattering is modeled by the Helmholtz equation
\begin{subequations}\label{eq:Helmholtz}
    \begin{align}
        -\Delta u-k^2u&=f\quad\text{ in }\Omega,\\
        \partial_nu-\texttt{i}ku&=g\quad\text{ on }\partial\Omega_1,\label{eq:Helmholtz2}\\
        u&=0\quad\text{ on }\partial\Omega_0.
    \end{align}
\end{subequations}
Here $k > 0$  denotes the wavenumber, $\texttt{i}=\sqrt{-1}$ is the imaginary unit,  and \eqref{eq:Helmholtz2} is an approximation to the Sommerfeld radiation condition at infinity. For large wavenumber $k\gg1$, the solution of \eqref{eq:Helmholtz} is highly oscillatory, leading to challenges in numerical approximation due to increased computational cost and pollution effects.  
{Let $H_{\partial\Omega_0}^1(\Omega, \mathbb{C}) = \{v \in H^1(\Omega, \mathbb{C}) : v|_{\partial\Omega_0} = 0\}$} be the complex-valued Sobolev space and $V_h\subset H_{{\partial\Omega_0}}^1(\Omega, \mathbb{C})$ be the continuous and piecewise $\mathcal{P}_p$ finite element subspace. The FEM for \eqref{eq:Helmholtz} seeks  $u_h \in V_h$  such that
\begin{equation*}\label{eq:Helmholtz_FEM}
\begin{aligned}
a(u_h, v_h) &=(\nabla u_h,\nabla v_h)-k^2(u_h,v_h)-\texttt{i}k(u_h,v_h)_{\partial\Omega_1}\\
&= (f,v_h)+(g,v_h)_{\partial\Omega_1}, \quad v_h \in V_h.   
\end{aligned}
\end{equation*}
Here $(\bullet,\bullet)$ and $(\bullet,\bullet)_{\partial\Omega_1}$ are complex-valued $L^2$ inner products.
A natural norm in the error analysis of \eqref{eq:Helmholtz_FEM} is $\|v\|_{1,k}=\sqrt{k^2\|v\|_{L^2(\Omega)}^2+|v|_{H^1(\Omega)}^2}$ (cf.~\cite{Wu2014,GongGrahamSpence2023}).
Similarly to Section \ref{subsect:convection_diffusion}, we use the Jacobi  $\|S^a_{\mathcal{P}_p}r\|_{1,k}$ and Gauss--Seidel $\|S^m_{\mathcal{P}_p}r\|_{1,k}$ a posteriori error estimates of AFEMs for \eqref{eq:Helmholtz}.

In the numerical experiment, we set $ \Omega_1 = (-0.5, 0.8) \times (-0.5, 0.5) $ and  $\Omega_0$ as a triangle with vertices $(0, 0)$,  $(0.5, \pm 0.5 \tan(\theta_0))$ with $\theta _0=\pi/6$. Let $(r,\theta)$ be the polar coordinates near the origin and $\alpha = \pi/(2\pi - 2\theta_0)$. The exact solution of \eqref{eq:Helmholtz} is $u = \phi(r) J_\alpha(kr) \sin\left( \alpha (\theta - \theta_0) \right) $, where $ J_\alpha$ is the Bessel function of the first kind and  
\begin{equation*}
    \phi(r) = \begin{cases}
    \left( 1 - \frac{r}{0.49} \right)^8, & r \leq 0.49, \\
    0, & r > 0.49
    \end{cases}
\end{equation*}
is a cutoff function. The polynomial degree $p$ ranges from 1 to 7 and the initial mesh size $h_0$ is $h_0 \approx 1/60$ for $p=1$ and $h_0 \approx 1/30$ for  $p\geq2$.
The experimental results in Figure \ref{fig:Helmholtz_error} demonstrate that the smoother-type estimators remain effective for the Helmholtz equation with large wave number $k=40\pi$. As shown in Figure \ref{fig:Helmholtz_mesh}, AFEMs based on smoother-type a posteriori error estimates correctly capture the oscillation and corner singularity of \eqref{eq:Helmholtz}. It is also observed from Figure \ref{fig:Helmholtz_error} that the effectivity ratios $\|S^a_{\mathcal{P}_p}r\|_{1,k}/\|u-u_h\|_{1,k}$ and $\|S^m_{\mathcal{P}_p}r\|_{1,k}/\|u-u_h\|_{1,k}$ are similar and uniform as the polynomial degree $p$ grows. Considering the computational cost,  $\|S^a_{\mathcal{P}_p}r\|_{1,k}$ is more efficient than its Gauss--Seidel counterpart.

\begin{figure}[thp]
\centering
\includegraphics[width=6cm]{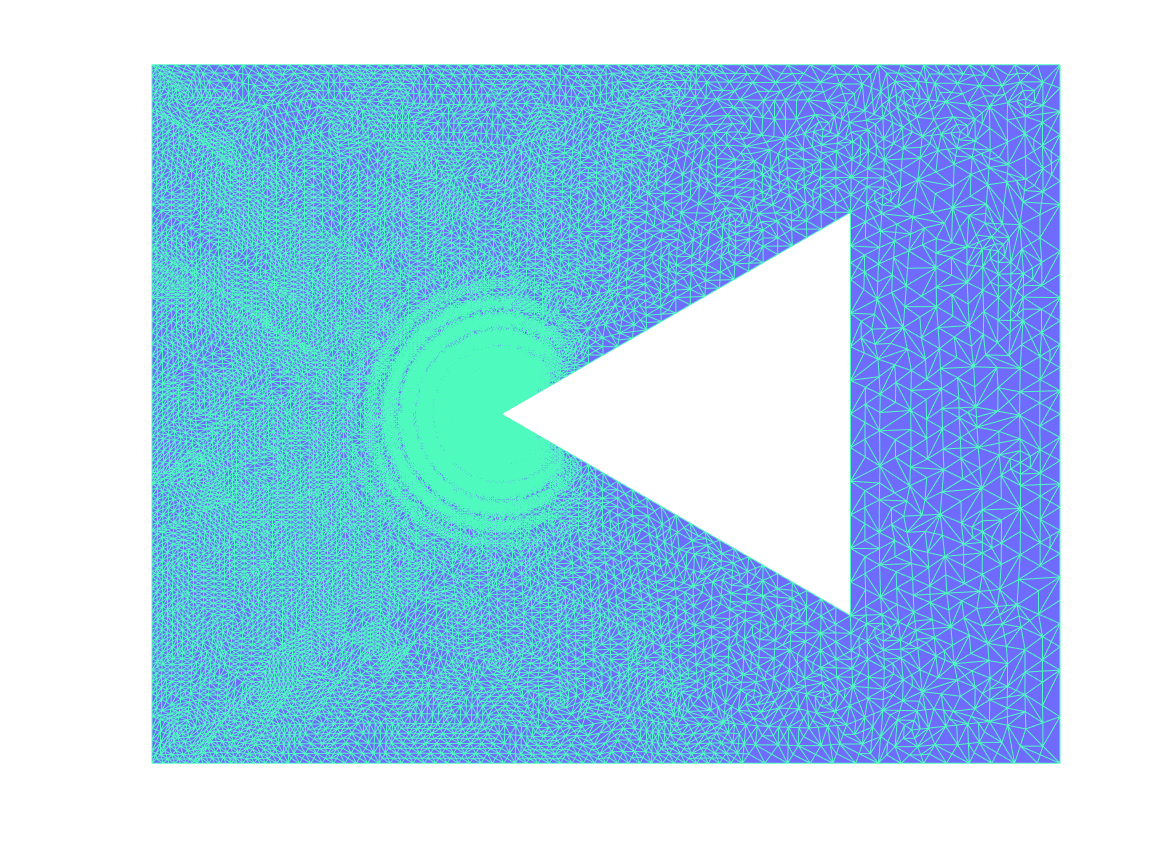} 
\includegraphics[width=6cm]{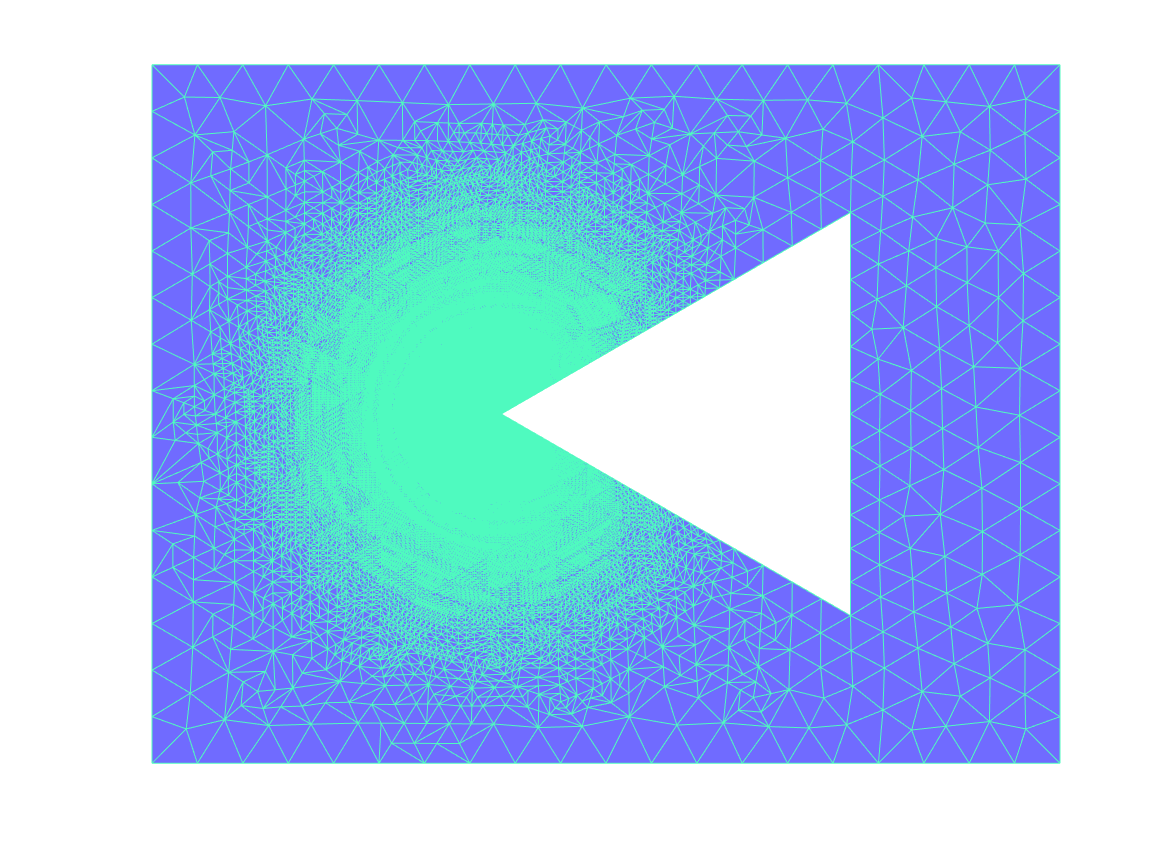}
    \caption{Adaptive meshes (approximately $10^5$ triangles) by $\mathcal{P}_1$-AFEM (left) and $\mathcal{P}_4$-AFEM (right) driven by the pointwise Jacobi smoother error estimator $\|S^a_{\mathcal{P}_p}r\|_{1,k}$.}
    \label{fig:Helmholtz_mesh}
\end{figure}

\begin{figure}[th]
    \centering
    \includegraphics[width=5.5cm]{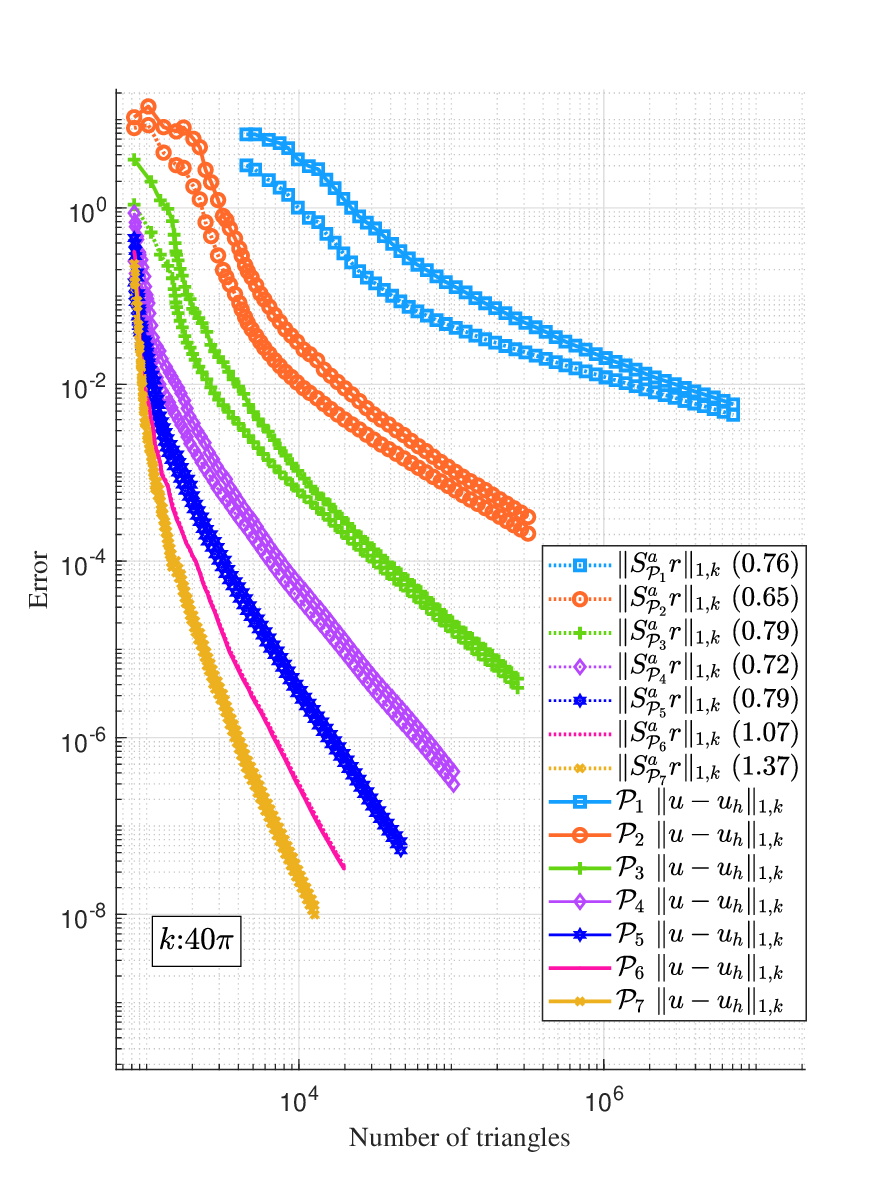} 
    \includegraphics[width=5.5cm]{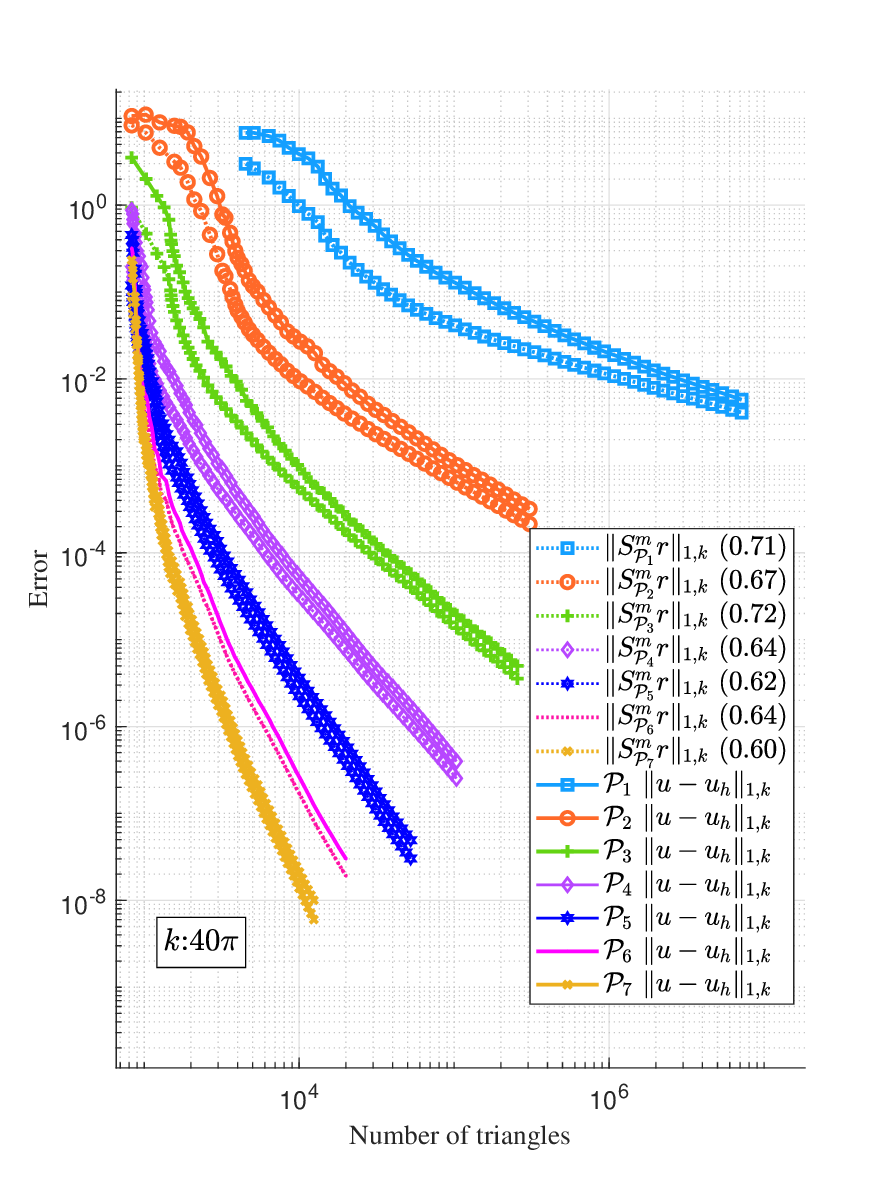}
    \caption{Convergence of $\mathcal{P}_p$-AFEM driven by Jacobi (Left) and Gauss--Seidel error estimators (Right) for the Helmholtz Equation with $k = 40\pi$ and estimator/error shown in the parenthesis.}
    \label{fig:Helmholtz_error}
\end{figure}

\begin{figure}[th]
    \centering
    \includegraphics[width=10cm]{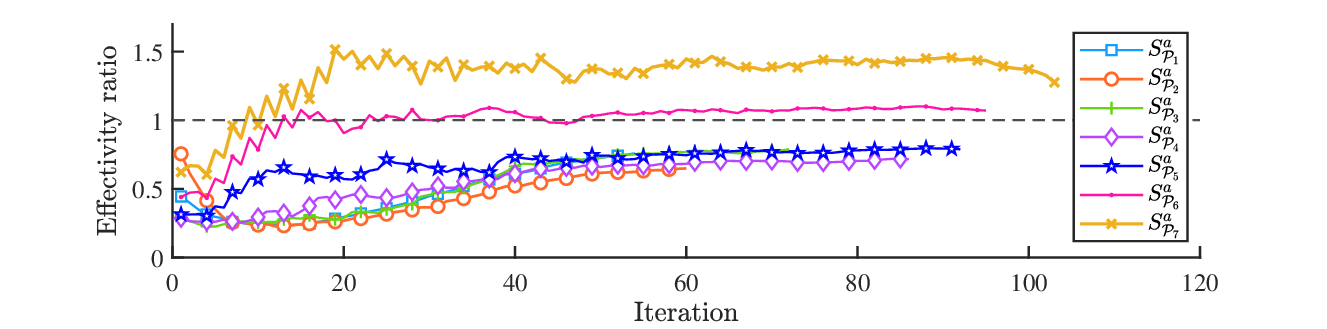} 
    \includegraphics[width=10cm]{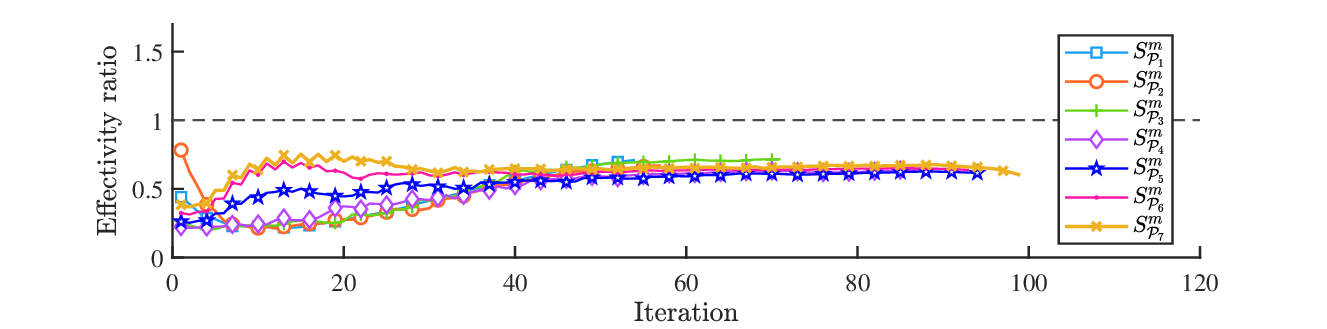}
    \caption{Effectivity ratios $\|S^a_{\mathcal{P}_p} r\|_{1,k} / \| u-u_h \|_{1,k}$ (top) and $\|S^m_{\mathcal{P}_p} r\|_{1,k} / \| u-u_h \|_{1,k}$ (bottom) for the Helmholtz equation.}
    \label{fig:Helmholtz_error_ratio}
\end{figure}

\section{Concluding Remarks}\label{sect:conclusion}
This paper develops a practical framework connecting a posteriori error estimates with smoothers of  iterative linear solvers. By leveraging smoothers of available preconditioners and linear solvers, we have derived a posteriori error estimates for various PDEs such as the Poisson, curl-curl, Biot, convection-diffusion and Helmholtz equations.  
For symmetric problems, reliability and efficiency of smoother-type error estimators have been theoretically verified with the help of two-level stable subspace decompositions and saturation assumptions. In addition, extensive numerical experiments demonstrate their robustness with respect to the polynomial degree.

The proposed analysis relies on the hierarchy $V_h\subset V_{h/2}\subset V$, which is available for standard conforming finite elements. However, $V_h\not\subset V_{h/2}\not \subset V$ for nonconforming finite elements and $V_h\not\subset V_{h/2}$ for some finite elements with extra smoothness, such as sophisticated structure-preserving elements for Stokes flow and elasticity.
In future work, we will explore  possible applications of smoother-type a posteriori error estimation to the aforementioned non-standard FEMs.

\section*{Acknowledgments} Y. L. would like to thank Prof. Shihua Gong for helpful discussion about iterative solvers for the Helmholtz equation. The authors would like to thank the anonymous referees for constructive remarks that  improved the quality and presentation of this paper. This work was supported by the National Key R\&D Program of China under grant 2024YFA1012600 and the National Natural Science Foundation of China under grant 12471346.

\bibliographystyle{elsarticle-num-names}

\end{document}